\newcommand{\utwi}[1]{\mbox{\boldmath $#1$}}
\renewcommand{\hat}{\widehat}
\renewcommand{\tilde}{\widetilde}
\newcommand{\cD}{{\cal D}}
\newcommand{\cL}{{\cal{L}}}
\newcommand{\cN}{{\cal N}}
\newcommand{\cG}{{\cal G}}
\newcommand{\cE}{{\cal E}}
\newcommand{\cM}{{\cal M}}
\newcommand{\cX}{{\cal X}}
\newcommand{\cY}{{\cal X}}
\newcommand{\be}{{\bf e}}
\newcommand{\bg}{{\bf g}}
\newcommand{\bp}{{\bf p}}
\newcommand{\br}{{\bf r}}
\newcommand{\bx}{{\bf x}}
\newcommand{\bw}{{\bf w}}
\newcommand{\bz}{{\bf z}}
\newcommand{\by}{{\bf y}}
\newcommand{\bC}{{\bf C}}
\newcommand{\bD}{{\bf D}}
\newcommand{\bJ}{{\bf J}}
\newcommand{\bX}{{\bf X}}
\newcommand{\bbeta}{{\utwi{\beta}}}
\newcommand{\bgamma}{{\utwi{\gamma}}}
\newcommand{\blambda}{{\utwi{\lambda}}}
\newcommand{\bphi}{{\utwi{\phi}}}
\newcommand{\bPhi}{{\utwi{\Phi}}}
\newcommand{\reals}{\mathbb{R}}
\newcommand{\sfT}{\textsf{T}}
\newcommand{\proj}{\mathsf{Proj}}
\newcommand{\diam}{\mathsf{diam}}
\DeclarePairedDelimiterX{\norm}[1]{\lVert}{\rVert}{#1}
\newtheorem{lemma}{Lemma}
\newtheorem{theorem}{Theorem}
\theoremstyle{definition}
\newtheorem{assumption}{Assumption}
\theoremstyle{definition}
\newtheorem{remark}{Remark}
\title{Online Primal-Dual Methods with Measurement Feedback for Time-Varying Convex Optimization} 
\author{Andrey Bernstein, Emiliano Dall'Anese, Andrea Simonetto$^\star$
\thanks{$^\star$Alphabetical order, authors contributed equally to the paper.}
\thanks{A. Bernstein is with the National Renewable Energy Laboratory (NREL), Golden, CO, USA. E. Dall'Anese is with the University of Colorado Boulder, Boulder, CO, USA. A. Simonetto is with the IBM Research Dublin, Dublin, Ireland. Emails: andrey.bernstein@nrel.gov, emiliano.dallanese@colorado.edu, andrea.simonetto@ibm.com.}
\thanks{This work was authored in part by the NREL, operated by Alliance for Sustainable Energy, LLC, for the U.S. Department of Energy (DOE) under Contract No. DE-AC36-08GO28308. This work was supported by the Laboratory Directed Research and Development (LDRD) Program at NREL. The views expressed in the article do not necessarily represent the views of the DOE or the U.S. Government. The U.S. Government retains and the publisher, by accepting the article for publication, acknowledges that the U.S. Government retains a nonexclusive, paid-up, irrevocable, worldwide license to publish or reproduce the published form of this work, or allow others to do so, for U.S. Government purposes.
}
}
\begin{document}
\maketitle

\begin{abstract}
This paper addresses the design and analysis of feedback-based online algorithms   to control systems or networked systems based on performance objectives and engineering constraints that may evolve over time. The emerging time-varying convex optimization formalism is leveraged to model optimal operational trajectories of the systems, as well as explicit local and network-level operational constraints. Departing from existing batch and feed-forward optimization approaches, the design of the algorithms capitalizes on an online implementation of primal-dual projected-gradient methods; the gradient steps are, however, suitably modified to accommodate  feedback from the system in the form of measurements -- hence, the term ``online optimization with feedback.'' By virtue of this approach, the resultant  algorithms can cope with model mismatches in the algebraic representation of the system states and outputs, they avoid pervasive measurements of exogenous inputs, and they naturally lend themselves to a distributed implementation. Under suitable  assumptions, analytical convergence claims are established in terms of dynamic regret. Furthermore, when the synthesis of the feedback-based online algorithms is based on a \emph{regularized} Lagrangian function, Q-linear convergence to solutions of the time-varying  optimization problem is shown. 
\end{abstract}

\section{Introduction}
\label{sec:introduction}

This paper focuses on time-varying optimization problems~\cite{Simonetto_Asil14} associated with systems or networked systems, for the purpose of modeling and controlling their operation  based on  performance objectives and engineering constraints that may evolve over time~\cite{opfPursuit,Rahili2015,Fazlyab2016a,Neely17}. The term ``networked systems'' here refers to a collection of systems coupled through intrinsic physical and behavioral interdependencies, and logically connected by an information infrastructure that supports given network-level control and optimization tasks. Examples include  communication systems, power grids, and robotic networks just to mention a few~\cite{Bullo18}. 

Suppose that  physical and/or behavioral interdependencies among systems in the network are modeled as 
\begin{align}
\label{eq:model_Phy}
\by(t) = \cM(\bx(t); t)
\end{align}
where $\bx(t) \in \mathbb{R}^n$ is a vector collecting given controllable inputs of the systems, $\by(t) \in \mathbb{R}^m$ represents observables or outputs of the network (quantities that pertain to both edges and nodes), and $\cM(\cdot \, ;  t): \mathbb{R}^n \rightarrow \mathbb{R}^m$ is a time-varying map defined over the domain of $\bx(t)$. For example, when a linear network model is utilized,~\eqref{eq:model_Phy} boils down to:
\begin{align}
\label{eq:model_PhyLin}
\by(t) = \bC \bx(t) + \bD \bw(t)
\end{align}
where $\bC \in \mathbb{R}^{m \times n}$ and $\bD \in \mathbb{R}^{m \times w}$ are given model parameters, and $\bw(t) \in \mathbb{R}^{w}$ is a vector of time-varying exogenous inputs (or, simply, uncontrollable quantities in the network). 

Consider associating with the networked systems a time-varying optimization of the form\footnote{\emph{Notation}: Upper-case (lower-case) boldface letters will be used for matrices (column vectors), and $(\cdot)^\sfT$ denotes transposition.  For a given $N \times 1$ vector $\bx \in \mathbb{R}^N$,  $\|\bx\|_2 := \sqrt{\bx^\sfT \bx}$.  Given a matrix $\bX \in \mathbb{R}^{N\times M}$, $[X]_{m,n}$  denotes its $(m,n)$-th entry and $\|\bX\|_2$ denotes the $\ell_2$-induced matrix norm. For a function $f: \mathbb{R}^N \rightarrow \mathbb{R}$, $\nabla_{\bx} f(\bx)$ returns the gradient vector of $f(\bx)$ with respect to $\bx \in \mathbb{R}^N$. $\mathrm{proj}_{\cX}\{\bx\}$ denotes a closest point to $\bx$ in $\cX$, namely $\mathrm{proj}_{\cX}\{\bx\} \in \arg \min_{\by \in \cX} \|\bx - \by\|_2 $.An operator $F: \cD \rightarrow \mathbb{R}^n$, $\cD \subseteq \mathbb{R}^n$ is strongly monotone with monotonicity constant $\eta$ if $(F(\bx) - F(\by))^\sfT(\bx - \by) \geq \eta \|\bx - \by\|^2$ for all $\bx, \by \in \cD$; the operator is monotone if $\eta = 0$. }
\begin{align} 
\label{eq:model_opt}
\min_{\bx \in \cX(t)} f(\bx, \by(\bx; t) ; t)
\end{align}
where $t \in \mathbb{R}_+$ is the temporal index; $\cX(t)$ is a convex set; $f: \mathbb{R}^{n} \times \mathbb{R}^{m} \times \mathbb{R}_+  \rightarrow \mathbb{R}$ is a convex function at each time $t$; and, the notation $\by(\bx; t)$ is utilized to stress that the observables $\by(t)$ depend on the vector variable $\bx$. The function $f$ is time-varying, in the sense that it can capture performance objectives that evolve over time. Accordingly, denoting as $\bx^{*}(t)$ an optimal solution of~\eqref{eq:model_opt} at time $t$, the optimization model~\eqref{eq:model_opt} leads to a continuous-time  optimal \emph{trajectory}. Given~\eqref{eq:model_Phy} and~\eqref{eq:model_opt}, the problem addressed in this paper pertains to the development and analysis of algorithms that enable \emph{tracking of an optimal trajectory} $\{\bx^{*}(t)\}_{t \in \mathbb{R}_+}$.  

For an isolated system or when the map~\eqref{eq:model_Phy} does not depend on time-varying exogenous inputs that are geographically and logically dispersed in the network, problem~\eqref{eq:model_opt} might be solved in a centralized setting based on a continuous time platform (see e.g.,~\cite{Arrowbook58,Cherukuri17, Rahili2015,Fazlyab2016a}); however, this paper focuses on the case where the measurements and communication of exogenous inputs introduce non-negligible delays, and the update of the input $\bx(t)$ leads to control actions that are implemented on digital control units. 

Let $s > 0$ denote a given sampling time and consider discretizing the solution trajectory of~\eqref{eq:model_opt} as $\{\bx^{*}(t_k)\}_{k \in \mathbb{N}}$, where $t_k := k s$. For perfect tracking,~\eqref{eq:model_opt} can be re-interpreted as a sequence of time-invariant problems that must be solved to convergence (i.e., batch solution) at each time $t_k$. However, a batch solution of~\eqref{eq:model_opt} might not be achievable within an interval that is consistent with the variability of $f(\cdot ; t)$ and the map $\cM(\cdot ; t)$ due to underlying communication and computational complexity requirements; for example, since iterative methods require multiple computation and communication rounds, the problem inputs $f(\cdot ; t)$ and $\cM(\cdot ; t)$ (and therefore the solution) might have already changed by the time the iterative method converged. Consider then the following \emph{online} first-order algorithm, tailored to the model~\eqref{eq:model_PhyLin} and to the case where the cost is $f(\by(\bx; t) ; t)$ for exposition simplicity:
\begin{align}
\bx(t_{k+1}) = & \proj_{\cX(t_k)}\Big\{ \bx(t_k) \nonumber \\
& \hspace{.5cm} - \alpha \bC^\sfT \nabla_\by f (\bC \bx(t_k) + \bD \bw(t_k); t_k)  \Big\} \label{eq:online}
\end{align}
where $\proj_{\cX}(\bz) := \arg \min_{\bx \in \cX} \|\bz - \bx\|_2$ denotes projection onto a convex set and $\alpha > 0$ is the step size. It is clear that $s$, in this case, represents the time required to perform one algorithmic iteration. 

Before elaborating on possible tracking properties of~\eqref{eq:online}, it is important to emphasize that the update~\eqref{eq:online} represents a \emph{feed-forward} (i.e., open loop) control method that presumes knowledge of the input-output map~\eqref{eq:model_PhyLin}. In fact, the function $f(\cdot ; t_k)$ in~\eqref{eq:online} is evaluated at the current output of the network, based on the postulated  model $\by(t_k) = \bC \bx(t_k) + \bD \bw(t_k)$. From a real-time optimization  perspective, this feature has fundamental drawbacks: 
\begin{itemize}
\item[(i)] The update~\eqref{eq:online} requires one to estimate the exogenous inputs $\bw(t_k)$ at each time $t_k$; this may be impractical in many realistic networked systems, especially when the number of exogenous inputs $w$ is much larger than $n$ and $m$ or when (part of) $\bw(t_k)$ might not be even observable. 
\item[(ii)] The feed-forward strategy~\eqref{eq:online} is sensitive to model mismatches; errors in the map~\eqref{eq:model_PhyLin} might drive the network operation to points that might not be implementable.   
\item[(iii)] The mathematical structure of the map $\cM(\bx(t); t)$ may prevent a distributed implementation of the update~\eqref{eq:online}.  
\item[(iv)] The update~\eqref{eq:online} does not acknowledge that the underlying systems may be governed by local controllers with given state dynamics; in fact,~\eqref{eq:online} presumes a time-scale separation where the local systems settle to a steady-state in response to a new command $\bx(t_k)$ within an interval $s$. 
\end{itemize}

To address challenges outlined above, the idea suggested in this paper is to  suitably modify the algorithmic updates of online optimization methods, such as~\eqref{eq:online}, to accommodate measurement feedback -- something that henceforth is referred to as \emph{online optimization with feedback}. In particular, letting $\hat{\bx}(t_k)$ and  $\hat{\by}(t_k)$ be \emph{measurements} of the input $\bx(t_k)$ and the output $\hat{\by}(t_k)$, respectively, we consider modifying~\eqref{eq:online} as 
\begin{align}
\bx(t_{k+1}) = & \proj_{\cX(t_k)}\Big\{ \hat{\bx}(t_k)  - \alpha \bC^\sfT \nabla_\by f (\hat{\by}(t_k); t_k)  \Big\} \label{eq:feedback}
\end{align}
where the measurement $\hat{\by}(t_k)$ replaces the network model $\by(t_k) = \bC \bx(t_k) + \bD \bw(t_k)$ and $\hat{\bx}(t_k)$ may replace the current iterate $\bx(t_k)$.  This simple conceptual modification leads to the following key advantages: 
\begin{itemize}
\item[(a.1)] Instead of measuring/estimating $w$ exogenous inputs $\bw(t_k)$,~\eqref{eq:feedback} relies on $m$ measurements of the outputs $\hat{\by}(t_k)$. This is of key importance when $m \ll w$.    
\item[(a.2)] The algorithm naturally accounts for the network physics via the measurements $\hat{\by}(t_k)$, and it does not rely on a synthetic network model.    
\item[(a.3)] The update~\eqref{eq:feedback} may naturally lend itself to a distributed implementation; see Remark \ref{re:distributed} in Section \ref{sec:problemformulation}. And,  
\item[(a.4)] The update~\eqref{eq:feedback} accounts for imperfect implementations/commands of the input $\bx(t_k)$ at the local systems. 
\end{itemize}

While the simplified setting~\eqref{eq:model_opt} and~\eqref{eq:feedback} was adopted to outline the main ideas, the following sections will present a much broader framework applicable to time-varying constrained convex problems. The design of the algorithms capitalizes on an online implementation of primal-dual projected-gradient methods; however, similar to~\eqref{eq:online}, the gradient steps are suitably modified to accommodate measurements. When the feedback-based primal-dual gradient method is applied to the time-varying Lagrangian, a dynamic regret analysis \cite{zinkevich} is provided. On the other hand, when considering a regularized Lagrangian function~\cite{Koshal11,opfPursuit,khuzani2016distributed}, performance of the proposed methods is assessed in terms of convergence of the iterates $\bx(t_{k})$ within a ball centered around the optimal trajectory $\{\bx^{*}(t_{k})\}_{k \in \mathbb{N}}$.  





This paper provides  the  following key contributions  relative to our domain-specific prior work~\cite{opfPursuit}: (i)~it considers generic time-varying  convex optimization problems with time-varying affine, linear, and nonlinear (convex) inequality constraints (on the other hand, \cite{opfPursuit} is limited to linear and affine inequality constraints); (ii) it provides a \emph{dynamic regret} analysis when a primal-dual gradient method is applied to the time-varying Lagrangian function; and (iii) it addresses the case where measurements of the network state are included in both primal and dual gradient steps, with due implications in the dynamic regret results as well as the \emph{Q-linear convergence} results obtained when considering a regularized Lagrangian function~\cite{Koshal11,khuzani2016distributed}. This paper also generalizes the domain-specific technical findings of~\cite{Tang17,Zhou17}, since~\cite{Zhou17} deals with linearly-constrained problems and~\cite{Tang17} leverages relaxations via approximate barrier functions. As a byproduct, the paper provides contributions over, e.g.,~\cite{Jokic09,Bolognani_feedback_15,Hirata,Hauswirth16}, where \emph{static} optimization problems were considered, and the earlier work~\cite{commelec1}  where no analytical convergence results were provided.  

In terms of  existing literature on regret analysis for online dual and primal-dual gradient methods \cite{onlineADMM,onlineSaddle,Lee17,TianyiChen2017,TianyiChen2017_Bandits}, the  contributions consist in: (i) proving dynamic (as opposed to  static) regret bounds; (ii) considering a general class of constrained optimization problems with feedback;  (iii) assuming time-varying feasible sets; and (iv) providing a bound on the average constraint violation. In particular, relative to~\cite{onlineSaddle}, the present paper considers primal-dual methods for generic time-varying constrained convex optimization problems (the analysis of~\cite{onlineSaddle} is limited to time-invariant consensus constraints), and projections in the algorithmic steps are performed on time-varying sets; further, the regret in~\cite{onlineSaddle} is computed with respect to a time-invariant optimizer, and no $Q$-linear convergence results are provided. With respect to the recent work \cite{TianyiChen2017}, the main contributions of this paper are: (i) the present paper addresses the design and analysis of algorithms for time-varying optimization problems where cost function, constraints, and implicit constraints $\bx(t) \in \cX(t)$ evolve over time (the implicit constraints must be satisfied at each iteration and are therefore not dualized in the construction of the Lagrangian); (ii) it provides linear convergence results when the algorithmic update is a strongly monotone operator; (iii) the results on the dynamic regret are derived under slightly weaker assumptions relative to\cite{TianyiChen2017}; and, (iv) the analysis of the primal-dual gradient method with errors due to measurements is a key novelty of the present paper.  

From an optimization standpoint, the paper extends the results of primal-dual-type methods of e.g.,~\cite{ OzdaglarSaddlePoint09,Koshal11,khuzani2016distributed,necoara2015linear} to the case of \emph{time-varying} problems and when feedback is utilized in the algorithmic steps [cf.~\eqref{eq:feedback}]. With respect to the time-varying problem formulations in~\cite{SimonettoGlobalsip2014,Ling14,Simonetto17}, the paper provides  results in the case of feedback-based methods. It is also worth pointing out that the proposed methodology can be cast within the domain of $\epsilon$-gradient methods~\cite{Bertsekas1999,Larsson03,necoara2014rate}; in this case, the paper extends the analysis of $\epsilon$-gradient methods to time-varying settings. 
Lastly, the paper provides an extensions of saddle-point flows~\cite{Arrowbook58,Elia-CDC11,Brunner-CDC12,Rahili2015,Cherukuri17} to the case of discrete-time steps, time-varying saddle functions, and feedback-based algorithmic steps.

The development of feedback-based online optimization methods has been, so far, driven by power systems application; see, for example, the works on frequency control~\cite{NaLi_ACC14,Papachristodoulou14} for transmission systems and for explicit power control in~\cite{opfPursuit,Tang17,Zhou17,commelec1,Hauswirth16}. However, the framework is generally applicable to a number of settings where the objective is to drive the  operation of physical and logical systems as well as networked systems to optimal operating points in real time. Application domains include, for example,  wireless communication systems~\cite{Low1999Flow,Chen12,Calvo18}, vehicle control~\cite{monteil2015},  water systems~\cite{Schutze03}, and robotic sensor networks~\cite{Bullo09}. 
It is also worth  pointing out that the general topic of online convex optimization and the associated (dynamic) regret analysis has been extensively studied in the theoretical machine learning literature; see, e.g., \cite{Besbes2015,Hall2015,Jadbabaie2015,Shahrampour2018} and references therein. This paper does not aim at providing a comprehensive overview of this topic; rather, the  main focus of this paper is to introduce the new concept of feedback-based online optimization, and to provide a regret analysis in the proposed setting as well as a convergence analysis in terms of the optimizer.

The remainder of the paper is organized as follows. Section~\ref{sec:problemformulation}  formulates the time-varying optimization problem and  outlines the proposed feedback-based online algorithm. Section~\ref{sec:regret}  provides a regret analysis for the algorithm when applied to the Lagrangian function, while Section~\ref{sec:convergenceSaddlePoint}  focuses on regularized Lagrangian functions. Section~\ref{sec:Examples} provides examples of applications, along with numerical results in Section~\ref{sec:results}.  Section~\ref{sec:Conclusions} concludes the paper.

\section{Feedback-Based Primal-Dual Method}
\label{sec:problemformulation}

Consider a network of $N$ systems, with the associated time-varying optimization problem:
\begin{subequations} 
\label{eqn:contProblem}
\begin{align} 
\textrm{(P0)}^{(t)} ~ &\min_{\substack{\bx \in \reals^n}}\hspace{.2cm} f_0(\by(\bx;t);t) + \sum_{i = 1}^N f_i(\bx_i;t) \\
&\mathrm{subject\,to:~} \bx_i \in \cX_i(t) , \, i = 1, \ldots, N \label{eqn:constr_X} \\
& \hspace{1.8cm} g_j(\by(\bx;t);t)  \leq 0 , \, j = 1, \ldots, M
 \end{align}
\end{subequations}
with $\cX_i(t) \subset \mathbb{R}^{n_i}$; $\sum_{i = 1}^N n_i = n$; and, where $\by(\bx; t) := \bC \bx + \bD \bw(t) \in \reals^m$ is  an algebraic representation of some observables in the systems as in~\eqref{eq:model_PhyLin}. Function $f_0(\by(\bx;t);t): \mathbb{R}^m \times \reals_+ \rightarrow \mathbb{R}$ is convex in $\bx$ at each time $t$, and it captures costs associated with the outputs $\by(\bx; t)$, while $f_i(\bx_i;t): \mathbb{R}^{n_i} \times \reals_+ \rightarrow \mathbb{R}$ is a convex function that models time-varying costs associated with the $i$-th sub-vector $\bx_i$. Finally, the convex functions $g_j(\by(\bx;t);t):  \mathbb{R}^{m} \times \reals_+ \rightarrow \mathbb{R}$ are utilized to impose time-varying constraints on $\by(\bx;t)$. We assume that $g_j(\by(\bx;t);t)$, for $j = 1, \ldots M_I$ is nonlinear and convex, whereas $g_j(\by(\bx;t);t)$, for $j = M_I + 1, \ldots M$, is linear or affine. 

As explained in the previous section, consider discretizing the temporal axis as $t_k = k s$, $k \in \mathbb{N}$, where $s > 0$ is a given sampling interval~\cite{Simonetto_Asil14,Simonetto17}\footnote{The choice of the sampling period is made depending on how fast one can run the computations (low $s$) and how much asymptotic error one can tolerate (high $s$).}. Accordingly, samples of the continuous-time problem~\eqref{eqn:contProblem} can be expressed as 
\begin{subequations} 
\label{eqn:sampledProblem}
\begin{align} 
\textrm{(P0)}^{(k)} ~ &\min_{\substack{\bx}}\hspace{.2cm} f_0^{(k)}(\by^{(k)}(\bx)) + \sum_{i = 1}^N f_i^{(k)}(\bx_i) \label{eq:obj_p0} \\
&\mathrm{subject\,to:~} \bx_i \in \cX_i^{(k)} , \, i = 1, \ldots, N \label{eqn:constr_Xsampl} \\
& \hspace{1.8cm} g_j^{(k)}(\by^{(k)}(\bx))  \leq 0 , \, j = 1, \ldots, M \label{eq:ineqconst}
 \end{align}
\end{subequations}
where $\cX_i^{(k)} := \cX_i(t_k)$, $f_i^{(k)}(\bx_i) := f_i(\bx_i;t_k)$, $\by^{(k)}(\bx) = \bC \bx + \bD \bw^{(k)}$, and similar notation is utilized for the remaining sampled quantities. 

For brevity, define $\bg^{(k)}(\by^{(k)}(\bx)) := [g_1^{(k)}(\by^{(k)}(\bx)), \ldots, g_M^{(k)}(\by^{(k)}(\bx))]^\sfT$, $f^{(k)}(\bx) := \sum_i f_i^{(k)}(\bx_i)$ and
\begin{align}
h^{(k)} (\bx) := f^{(k)}(\bx) + f_0^{(k)}(\by^{(k)} (\bx)). 
\end{align}
Further, 
let $\blambda \in \mathbb{R}_+^M$ denote the vector of dual variables associated with~\eqref{eq:ineqconst}. Then, the time-varying Lagrangian function is given by: 
\begin{align}
\cL^{(k)}(\bx, \blambda) & :=h^{(k)} (\bx) + \blambda^\sfT\bg^{(k)}\left(\by^{(k)}(\bx)\right) \, . \label{eqn:lagrangian}
\end{align}

Similar to, e.g.,~\cite{Koshal11}, consider the following regularized Lagrangian function
\begin{align} \label{eq:lagrangian_reg}
\cL^{(k)}_{p,d}(\bx, \blambda) := \cL^{(k)}(\bx, \blambda) + \frac{p}{2}\|\bx\|_2^2 - \frac{d}{2}\|\blambda\|_2^2 
\end{align}
where $p \geq 0$ and $d \geq 0$ are given regularization parameters, and consider  the following time-varying minimax problem: \begin{align} \label{eq:minmax}
\max_{\blambda \in \cD^{(k)}} \min_{\bx \in \cX^{(k)}} \cL^{(k)}_{p,d}(\bx, \blambda) \, \hspace{.5cm} k \in \mathbb{N}
\end{align}     
where $\cX^{(k)} := \cX_1^{(k)} \times \ldots \times \cX_N^{(k)}$ and $\cD^{(k)}$ is a convex and compact set constructed as explained shortly in Section~\ref{sec:regret} or as in~\cite{OzdaglarSaddlePoint09,Koshal11}. Hereafter,  $\bz^{(*, k)} := \{\bx^{(*,k)}, \blambda^{(*,k)}\}_{k\in \mathbb{N}}$ denote an \emph{optimal trajectory} of~\eqref{eq:minmax}. 

Based on the time-varying minimax problem~\eqref{eq:minmax}, the sequential execution of the following steps constitutes the proposed \emph{feedback-based online primal-dual gradient algorithm}: 
\begin{subequations} \label{eq:primalDualFeedback}
\begin{align}
\bx^{(k+1)} & = \proj_{\cX^{(k)}}\big\{(1 - \alpha p) \bx^{(k)} - \alpha \big(\nabla_\bx f^{(k)}(\bx^{(k)}) \nonumber \\  
& \hspace{-.5cm} + \bC^\sfT\nabla_\by f_0^{(k)}(\hat{\by}^{(k)} ) 
 + \sum_{j = 1}^M \lambda_j^{(k)}\bC^\sfT\nabla g_j^{(k)}(\hat{\by}^{(k)} )
\big)\big\}  \label{eq:primalstep} \\
\blambda^{(k+1)} & = \proj_{\cD^{(k)}}\left\{(1 - \alpha d) \blambda^{(k)} + \alpha \bg^{(k)}(\hat{\by}^{(k)} ) \right\} \label{eq:dualstep}
\end{align}
\end{subequations}
where $\alpha > 0$ is a constant step size, and $\hat{\by}^{(k)}$ is a measurement of $\by^{(k)}(\bx^{(k)})$ collected at time $t_k$. In the following, convergence results will be provided for the online algorithm~\eqref{eq:primalDualFeedback}, depending on the choice of the parameters $p$ and $d$. In particular, the following two cases are in order. 

\vspace{.1cm}

\noindent \textbf{Case~1}: $p = 0$, $d = 0$. Obviously, $\cL^{(k)}_{0,0}(\bx, \blambda) = \cL^{(k)}(\bx, \blambda)$, and $\{\bx^{(*,k)}\}_{k \in \mathbb{N}}$ is a (discretized) optimal solution trajectory of~\eqref{eqn:sampledProblem}. To capture the temporal variability of~\eqref{eq:minmax} (and, hence, of~\eqref{eqn:sampledProblem} as well as its continuous-time counterpart), define the following quantity:  
\begin{equation} \label{eqn:sigma}
\sigma^{(k)} := \|\bx^{(*, k+1)} - \bx^{(*, k)}\|_2 \, .
\end{equation}
Furthermore, let
\begin{equation} \label{eqn:regret_def}
R^{(k)} := \frac{1}{k} \sum_{\ell=1}^k \left(h^{(\ell)}(\bx^{(\ell)})  - h^{(\ell)}(\bx^{(*, \ell)}) \right)
\end{equation}
denote the \emph{average dynamic regret} at time step $t_k$. In this first case, to characterize the performance of the feedback-based online algorithm~\eqref{eq:primalDualFeedback}, asymptotic bounds on the dynamic regret $R^{(k)}$ will be established in Section~\ref{sec:regret}. Additionally, Section~\ref{sec:regret} will present asymptotic bounds on the \emph{average constraint violation.} The results for the dynamic regret and the average constraint violation are applicable also to other cases, where either $p$ or $d$ are positive.
	
\vspace{.1cm}

\noindent \textbf{Case~2}: $p > 0$, $d > 0$. In this case, the regularized Lagrangian $\cL^{(k)}_{p,d}(\bx, \blambda)$ is strongly convex in $\bx$ and strongly concave in the dual variables $\blambda$; hence, the  optimizer $\bz^{(*, k)}$ of~\eqref{eq:minmax}  is unique at each time $t_k$. The optimizer $\bz^{(*, k)}$, however, is not necessarily in the set of saddle points of the original Lagrangian $\cL^{(k)}(\bx, \blambda)$~\cite{Koshal11}. In fact, it is closely related to the so-called approximate Karush-Kuhn-Tucker (KKT) point~\cite{Andreani11} associated with the problem~\eqref{eqn:sampledProblem}; see, for example,~\cite{Koshal11} for a bound on the distance between $\bx^{(*,k)}$ and the solution of~\eqref{eqn:sampledProblem}.    In this case, asymptotic bounds will be derived for the Euclidean distance between $\bz^{(*, k)}$ and the output of the algorithm $\bz^{(k)} := \{ \bx^{(k)}, \blambda^{(k)}\}$; that is, the following quantity will be bounded: 
\begin{equation}
S^{(k)} := \|\bz^{(k)} - \bz^{(*, k)}\|_2 \, .
\end{equation}
Section~\ref{sec:convergenceSaddlePoint} will show that $S^{(k)}$ convergences $Q$-linearly within a ball centered about the optimal trajectory $\bz^{(*, k)}$. To derive bounds on $S^{(k)}$, the following quantity will be utilized to capture the temporal variability of the optimizer $\bz^{(*, k)}$ [cf.~\eqref{eqn:sigma}]:  
\begin{equation} \label{eqn:sigma2}
\bar{\sigma}^{(k)} := \|\bz^{(*, k+1)} - \bz^{(*, k)}\|_2 \, .
\end{equation}

\vspace{.1cm}

It is worth mentioning that the dynamic regret analysis could be applicable also to \textbf{Case 2}; however, the resultant regret-type results would be with respect to a perturbed solution that one has by utilizing the regularized Lagrangian function. The objective of \textbf{Case 2} is to show that, by utilizing a regularized Lagrangian function, one can  establish $Q$-linear convergence associated with $S^{(k)}$. These two cases highlight the different convergence results that become available based on the choice of the parameters $p$ and $d$.  

For exposition simplicity, the paper focuses on the case where only measurements of $\by^{(k)}(\bx^{k})$ are utilized in the steps~\eqref{eq:primalDualFeedback}; however, the results can be naturally extended to the case where measurements of $\bx^{(k)}$ are utilized too.

Pertinent assumptions that are utilized to derive the results explained above are stated next.

\begin{assumption} 
\label{ass:constqualification}
Slater's constraint qualification holds at each time instant $k$. 
\end{assumption}
\begin{assumption} \label{ass:sets}
The set $\cX^{(k)}$ is convex and compact for all $k$. Moreover, the sequence $\{\cX^{(k)}\}$ is \emph{uniformly} bounded. 
That is, $B := \sup_{k \geq 1} \sup_{\bx \in \cX^{(k)}} \|\bx\|_2 < \infty$. Also, let $D < \infty$ denote the upper bound on the diameters of $\{\cY^{(k)}\}$, so that $\diam (\cY^{(k)}) \leq D$ for all $k$.
\end{assumption}
\begin{assumption} 
\label{ass:cost}
The functions $f_0^{(k)}(\by)$ and $f^{(k)}_i(\bx_i)$ are convex and continuously differentiable for all $k$. The gradient map $\nabla_\bx f^{(k)}(\bx)$ is Lipschitz continuous with constant $L \geq 0$ over $\reals^n$ for all $k$.   Furthermore, $\nabla_\by f_0^{(k)}(\by)$  is Lipschitz continuous with constant $L_0 \geq 0$ over $\reals^m$ for all $k$. 
\end{assumption}
\begin{assumption} 
\label{ass:nonlinearconstr}
For each $j = 1, \ldots, M_I$ and all $k$, the function $g_j^{(k)}(\by)$ is convex and continuously differentiable.  Moreover, it has a  Lipschitz continuous gradient with constant $L_{g_m} > 0$. Let $\bJ^{(k)} (\by)$ denote the Jacobian (matrix-valued) map of $\bg^{(k)}(\by)$ with entries
\begin{equation}
\left(\bJ^{(k)} (\by) \right)_{i \ell} := \frac{\partial (\bg^{(k)}(\by))_i }{ \partial (\by)_{\ell}},
\end{equation}
and let $L_{G} \geq 0$ denote the Lipschitz constant of $\bJ^{(k)} (\by)$.
\end{assumption}
\begin{assumption} \label{ass:error}
There exists a scalar $e_y < + \infty$ such that the measurement error can be bounded as  
\begin{align}\sup_{k \geq 1} \|\hat{\by}^{(k)} - \by^{(k)}(\bx^{(k)})\|_2 \leq e_y.
\end{align}
\end{assumption}

From Assumption~\ref{ass:constqualification}, it follows that strong duality holds uniformly in time for the convex problems~\eqref{eqn:sampledProblem}. It is worth noticing that,  from the continuity of the Jacobian and the compactness of $\cX^{(k)}$,  there exists a scalar $M_g < + \infty$ such that $\|\bJ^{(k)} (\by)\|_2  \leq M_g$ for all $k$. In fact, one can set:
\begin{align}
M_g = \sup_{k} \max_{\bx \in \cX^{(k)}} \|\bJ^{(k)} (\by)\|_2. 
\end{align}
Then, using the Mean Value Theorem, one can show that 
\begin{align} \label{eq:lip_g}
\|\bg^{(k)}(\by_1) - \bg^{(k)}(\by_2)\|_2 \leq M_g \|\by_1 - \by_2\|_2
\end{align}
for all $k \in \mathbb{N}$. The parameter $M_g$ will be utilized in the subsequent sections to establish various convergence results.  Since the online algorithm~\eqref{eq:primalDualFeedback} leverages measurements of $\by^{(k)}(\bx^{(k)})$ at each time $k \in \mathbb{N}$, the bound in Assumption~\ref{ass:error} models measurements errors, quantization errors, model mismatches between the network physics and  the algebraic representation~\eqref{eq:model_PhyLin}, and imperfect implementation of the input $\bx(t_k)$ at the local systems/nodes (that translates into an imperfect $\by^{(k)}(\bx^{(k)})$).

With these assumptions in place, a dynamic regret analysis will be presented in the ensuing section. Per-iteration and asymptotic bounds on $S^{(k)}$ will then be presented in Section~\ref{sec:convergenceSaddlePoint}. But first, a remark on the distributed implementation is in order. 

\begin{remark}[Distributed implementation]
\label{re:distributed}
Similarly to the illustrative example~\eqref{eq:online}, the model-based counterpart of~\eqref{eq:primalDualFeedback} requires a centralized implementation of the primal and dual projected gradient steps. In fact, the iterates $\{\bx_i^{(t)}\}_{i = 1}^N$ pertaining to the $N$ systems (or a subset of them, depending on the zero entries  in the matrix $\bC$) must be collected at a fusion center or network-level controller in order to evaluate the gradient $\nabla f_0^{(t)}(\bC \bx^{(k)} + \bD \bw^{(k)})$ and the gradient $\nabla g_i^{(t)}(\bC \bx^{(k)} + \bD \bw^{(k)})$ in the primal update, as well as the function $\bg^{(t)}(\bC \bx^{(k)} + \bD \bw^{(k)})$ in the dual update. On the other hand, the measurement-based steps~\eqref{eq:primalDualFeedback} naturally decouple into $N$ updates, where each system $i$ updates $\bx_i^{(t)}$ locally; the dual step can be performed locally by  sensors or by network agents, which subsequently  broadcast $\nabla f_0^{(t)}(\by^{(t)})$ and $\lambda_j^{(k)}\bC^\sfT\nabla g_j^{(k)}(\hat{\by}^{(k)} )$ to the $N$ systems.  
\end{remark}


\section{Regret Analysis}
\label{sec:regret}

Recall that in \textbf{Case 1} the regularization parameters are $p = d = 0$. Then, since $\cL^{(k)}_{0,0}(\bx, \blambda) = \cL^{(k)}(\bx, \blambda)$, the primal update  \eqref{eq:primalstep} can be compactly re-written as
\begin{align}
\bx^{(k+1)} & = \proj_{\cX^{(k)}}\big\{\bx^{(k)} - \alpha \hat{ \nabla}_x \cL^{(k)} (\bx^{(k)}, \blambda^{(k)}) \big \}, \label{eq:primalstep_regret}
\end{align}
where
\begin{align}
 \hat{ \nabla}_x \cL^{(k)} (\bx, \blambda) &:= \nabla f^{(k)} (\bx) + \bC^\sfT \nabla f_0^{(k)} (\hat{\by}^{(k)}) \nonumber \\
 & \qquad + \left( \bJ^{(k)} (\hat{\by}^{(k)}) \bC \right)^\sfT \blambda \label{eq:gradXApprox}
 \end{align}
On the other hand, the sets $\cD^{(k)}$ in \eqref{eq:dualstep} are  chosen as follows: 
\begin{equation} \label{eq:Lambda}
\cD^{(k)} \equiv \Lambda_{\alpha, \kappa} := \left\{\blambda \in \reals_{+}^M: \, \|\blambda\|_2 \leq \frac{1}{\alpha^\kappa}\right\}
\end{equation}
for some $\kappa >0$.

A similar choice of $\cD^{(k)}$ can be utilized in Section~\ref{sec:convergenceSaddlePoint}; however, the  choice of \eqref{eq:Lambda} is particularly essential for the regret bounds below.

The dynamic regret of the algorithm is analyzed next. To this end, introduce the following  notation for brevity: 
\begin{align}
G & := \|\bC\|_2 M_g, \\ 
F &:= \sup_{k \geq 1} \sup_{\bx \in \cX^{(k)}} \|\nabla h^{(k)} (\bx) \|_2,\\
g &:= \sup_{k \geq 1} \sup_{\bx \in \cX^{(k)}} \| \bg^{(k)} (\by^{(k)} (\bx))\|_2, \\
L_x &:= \|\bC\|_2 \left(L_0 + \frac{L_G}{\alpha^\kappa}  \right), \\
F_x &:= F + \frac{G}{\alpha^\kappa}.
\end{align} 
With this notation in place, the following results for the dynamic regret and constraint violation are presented; the proofs are provided in Appendix \ref{sec:proof_regret}. 

\begin{theorem} \label{thm:regret}
Under Assumptions \ref{ass:constqualification}, \ref{ass:sets}, \ref{ass:cost}, \ref{ass:nonlinearconstr}, and \ref{ass:error}, for any $\alpha > 0$, $\kappa > 0$, and $k \in \mathbb{N}$, we have that
\begin{align}
&R^{(k)} \leq B^{(k)}(\alpha, \kappa) := \frac{B}{\alpha k} +  K_1 \alpha + K_2 \alpha^{1 - \kappa} + K_3 \alpha^{1 - 2 \kappa} \nonumber\\
&\quad  + K_4(\alpha) e_y +  K_5(\alpha) e_y^2 + K_6(\alpha) \frac{1}{k}\sum_{\ell=1}^k \sigma^{(\ell)} \label{eqn:regret_anytime}  
\end{align}
and, therefore,
\begin{align}
&\limsup_{k \rightarrow \infty} R^{(k)} \leq B^{(\infty)}(\alpha, \kappa) :=  K_1 \alpha + K_2 \alpha^{1 - \kappa} + K_3 \alpha^{1 - 2 \kappa} \nonumber\\
&\quad  + K_4(\alpha) e_y +  K_5(\alpha) e_y^2 + K_6(\alpha) \limsup_{k \rightarrow \infty}  \frac{1}{k}\sum_{\ell=1}^k \sigma^{(\ell)} \label{eqn:regret}
\end{align}
where 
\begin{align*}
&K_1 := \frac{1}{2}(F^2 + g^2), \, K_2 := FG, \, K_3 := \frac{1}{2} G^2 \\
&K_4(\alpha) := (2B + \alpha F + \alpha^{1 - \kappa} G) L_x + \left(\frac{2}{\alpha^\kappa} + \alpha g \right ) M_g, \\
&K_5(\alpha) := \frac{\alpha}{2} (L_x^2 + M_g^2), \\
&K_6(\alpha) := \frac{1}{\alpha}(D + B).
\end{align*}
\end{theorem}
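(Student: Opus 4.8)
The plan is to run the standard online-projected-gradient telescoping argument, but carefully tracking the three distinct sources of error: the ``regularization'' penalty from the large dual set $\Lambda_{\alpha,\kappa}$, the measurement error $e_y$, and the temporal drift $\sigma^{(\ell)}$. First I would write the one-step nonexpansiveness inequality for the primal projection: since $\bx^{(*,\ell)} \in \cX^{(\ell)}$ and $\bx^{(\ell+1)} = \proj_{\cX^{(\ell)}}\{\bx^{(\ell)} - \alpha \hat\nabla_x \cL^{(\ell)}(\bx^{(\ell)},\blambda^{(\ell)})\}$, I get
\begin{align}
\|\bx^{(\ell+1)} - \bx^{(*,\ell)}\|_2^2 \leq \|\bx^{(\ell)} - \bx^{(*,\ell)}\|_2^2 - 2\alpha \big(\hat\nabla_x \cL^{(\ell)}(\bx^{(\ell)},\blambda^{(\ell)})\big)^\sfT(\bx^{(\ell)} - \bx^{(*,\ell)}) + \alpha^2 \|\hat\nabla_x \cL^{(\ell)}(\bx^{(\ell)},\blambda^{(\ell)})\|_2^2 \, . \nonumber
\end{align}
The cross term is handled by replacing the measured gradient $\hat\nabla_x \cL^{(\ell)}$ by the exact gradient $\nabla_x \cL^{(\ell)}(\bx^{(\ell)},\blambda^{(\ell)})$ (the discrepancy is controlled by Assumption~\ref{ass:error}, Assumption~\ref{ass:cost}, Assumption~\ref{ass:nonlinearconstr}, and the bound $\|\bJ^{(\ell)}\|_2 \leq M_g$, giving an $O(e_y)$ term times $\|\bx^{(\ell)}-\bx^{(*,\ell)}\|_2 \leq 2B$), and then using convexity of $\cL^{(\ell)}(\cdot,\blambda^{(\ell)})$ in $\bx$ to lower-bound $\nabla_x \cL^{(\ell)\sfT}(\bx^{(\ell)} - \bx^{(*,\ell)}) \geq \cL^{(\ell)}(\bx^{(\ell)},\blambda^{(\ell)}) - \cL^{(\ell)}(\bx^{(*,\ell)},\blambda^{(\ell)})$. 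The squared-gradient term is bounded by $F_x^2$-type constants (using $\|\blambda^{(\ell)}\|_2 \leq \alpha^{-\kappa}$ on $\Lambda_{\alpha,\kappa}$), plus an $O(e_y)$ and $O(e_y^2)$ correction; this is where the $K_3\alpha^{1-2\kappa}$, $K_5(\alpha)e_y^2$ terms originate.

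Next I would run the symmetric computation for the dual update: $\blambda^{(\ell+1)} = \proj_{\cD^{(\ell)}}\{\blambda^{(\ell)} + \alpha \bg^{(\ell)}(\hat\by^{(\ell)})\}$ against any fixed $\blambda \in \Lambda_{\alpha,\kappa}$, giving
\begin{align}
\|\blambda^{(\ell+1)} - \blambda\|_2^2 \leq \|\blambda^{(\ell)} - \blambda\|_2^2 + 2\alpha \big(\bg^{(\ell)}(\hat\by^{(\ell)})\big)^\sfT(\blambda^{(\ell)} - \blambda) + \alpha^2 g^2 + O(\alpha e_y) + O(\alpha^2 e_y^2) \, , \nonumber
\end{align}
and using linearity of $\cL^{(\ell)}$ in $\blambda$, the cross term equals $\cL^{(\ell)}(\bx^{(\ell)},\blambda) - \cL^{(\ell)}(\bx^{(\ell)},\blambda^{(\ell)})$ up to a measurement-error correction. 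Adding the primal and dual inequalities, the $\pm \cL^{(\ell)}(\bx^{(\ell)},\blambda^{(\ell)})$ terms cancel, leaving the saddle gap $\cL^{(\ell)}(\bx^{(\ell)},\blambda) - \cL^{(\ell)}(\bx^{(*,\ell)},\blambda^{(\ell)})$ on the right. Choosing $\blambda$ appropriately (either $\blambda = 0$, or $\blambda$ a Slater/KKT multiplier which lies in $\Lambda_{\alpha,\kappa}$ for $\alpha$ small by Assumption~\ref{ass:constqualification}) lets me lower-bound this gap by $h^{(\ell)}(\bx^{(\ell)}) - h^{(\ell)}(\bx^{(*,\ell)})$ — using $\blambda^{(\ell)\sfT}\bg^{(\ell)} \geq$ something and complementary slackness at $\bx^{(*,\ell)}$ — which is exactly the regret summand.

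The drift term enters when I telescope: because the comparator $\bx^{(*,\ell)}$ changes with $\ell$, I must bound $\|\bx^{(\ell+1)} - \bx^{(*,\ell+1)}\|_2^2 \leq \|\bx^{(\ell+1)} - \bx^{(*,\ell)}\|_2^2 + 2\|\bx^{(\ell+1)} - \bx^{(*,\ell)}\|_2 \sigma^{(\ell)} + (\sigma^{(\ell)})^2$, and similarly the dual set $\cD^{(\ell)}$ is actually time-invariant here ($\equiv \Lambda_{\alpha,\kappa}$) so no dual drift appears — this is why only $\sigma^{(\ell)}$ and not $\bar\sigma^{(\ell)}$ shows up in Theorem~\ref{thm:regret}. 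Using $\|\bx^{(\ell+1)} - \bx^{(*,\ell)}\|_2 \leq D+B$ (diameter bound from Assumption~\ref{ass:sets}) absorbs the drift cross-term into $K_6(\alpha)\sigma^{(\ell)}$ with $K_6(\alpha) = (D+B)/\alpha$. Then summing $\ell = 1,\dots,k$, dividing by $\alpha k$, and telescoping the potential $\|\bx^{(\ell)} - \bx^{(*,\ell)}\|_2^2 + \|\blambda^{(\ell)} - \blambda\|_2^2$ leaves only the initial term $\leq B/(\alpha k)$ (plus a harmless constant from the dual comparator norm, which I would fold in), yielding~\eqref{eqn:regret_anytime}; taking $\limsup_{k\to\infty}$ kills the $B/(\alpha k)$ term and gives~\eqref{eqn:regret}.

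The main obstacle I anticipate is bookkeeping the measurement-error terms cleanly so they collect into exactly $K_4(\alpha)e_y + K_5(\alpha)e_y^2$ with the stated constants: the $e_y$ contributions come from three places (the primal cross term, the primal squared-gradient term, and the dual cross/squared terms), each carrying a different combination of $B$, $F$, $G$, $M_g$, $L_x$, and powers of $\alpha^{\kappa}$, and one must use $\|\nabla f_0^{(\ell)}(\hat\by) - \nabla f_0^{(\ell)}(\by)\|_2 \leq L_0 e_y$, $\|\bJ^{(\ell)}(\hat\by) - \bJ^{(\ell)}(\by)\|_2 \leq L_G e_y$ (hence the $L_x = \|\bC\|_2(L_0 + L_G/\alpha^\kappa)$ grouping, with the $\alpha^{-\kappa}$ arising from $\|\blambda^{(\ell)}\|_2$), and $\|\bg^{(\ell)}(\hat\by) - \bg^{(\ell)}(\by)\|_2 \leq M_g e_y$ from~\eqref{eq:lip_g}. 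A secondary subtlety is verifying that the fixed dual comparator used to extract $h^{(\ell)}(\bx^{(\ell)}) - h^{(\ell)}(\bx^{(*,\ell)})$ indeed lies in $\Lambda_{\alpha,\kappa}$ uniformly — this needs the Slater-based boundedness of optimal multipliers together with taking $\alpha$ small enough that $\alpha^{-\kappa}$ exceeds that bound, which is implicit in the ``$\alpha > 0$'' statement but should be noted.
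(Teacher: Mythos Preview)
Your proposal is correct and follows essentially the same route as the paper: a one-step primal inequality via projection nonexpansiveness plus convexity of $\cL^{(\ell)}(\cdot,\blambda^{(\ell)})$, a one-step dual inequality via linearity of $\cL^{(\ell)}(\bx^{(\ell)},\cdot)$, summation and telescoping with the drift term handled exactly as you describe. One clarification worth making: the fixed dual comparator should simply be $\blambda = 0$ (which you list as an option), and this trivially lies in $\Lambda_{\alpha,\kappa}$ for every $\alpha>0$; then $\cL^{(\ell)}(\bx^{(\ell)},0) - \cL^{(\ell)}(\bx^{(*,\ell)},\blambda^{(\ell)}) \ge h^{(\ell)}(\bx^{(\ell)}) - h^{(\ell)}(\bx^{(*,\ell)})$ follows from \emph{feasibility} of $\bx^{(*,\ell)}$ (so $\bg^{(\ell)}(\by^{(\ell)}(\bx^{(*,\ell)}))\le 0$) and $\blambda^{(\ell)}\ge 0$, not from complementary slackness. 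Consequently your ``secondary subtlety'' about whether the optimal multipliers fit inside $\Lambda_{\alpha,\kappa}$ is not needed here (the paper's Remark~\ref{rem:Dk} makes exactly this point), and the statement genuinely holds for \emph{all} $\alpha>0$, not just small $\alpha$. With $\blambda=0$ and the initialization $\blambda^{(1)}=0$, the dual potential contributes nothing to the telescoped initial term, so only the primal $\|\bx^{(1)}-\bx^{(*,1)}\|_2^2/(2\alpha k)$ remains.
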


\begin{theorem} \label{thm:violation}
Under Assumptions \ref{ass:constqualification}, \ref{ass:sets}, \ref{ass:cost}, \ref{ass:nonlinearconstr}, and \ref{ass:error}, for any $\alpha > 0$, $\kappa > 0$, and $k \in \mathbb{N}$, the constraint violation induced by the algorithm~\eqref{eq:primalDualFeedback} can be bounded as:
\begin{subequations} \label{eqn:constraints_anytime}
\begin{align} 
& \frac{1}{k} \sum_{\ell=1}^k \bg^{(\ell)}(\by^{(\ell)}(\bx^{(\ell)})) \nonumber \\
& \hspace{2.0cm} \leq \alpha^\kappa \left(B^{(k)}(\alpha, \kappa) - R^{(k)}\right)\\
& \hspace{2.0cm} \leq \alpha^\kappa (B^{(k)}(\alpha, \kappa) + 2 F B), \label{eqn:constraints_anytime2}
\end{align}
\end{subequations}
and, therefore, 
\begin{subequations} \label{eqn:constraints}
\begin{align} 
& \limsup_{k \rightarrow \infty} \frac{1}{k} \sum_{\ell=1}^k \bg^{(\ell)}(\by^{(\ell)}(\bx^{(\ell)})) \nonumber \\
& \hspace{2.0cm} \leq \alpha^\kappa \left(B^{(\infty)}(\alpha, \kappa) - \liminf_{k \rightarrow \infty} R^{(k)}\right)\\
& \hspace{2.0cm} \leq \alpha^\kappa (B^{(\infty)}(\alpha, \kappa) + 2 F B), \label{eqn:constraints2}
\end{align}
\end{subequations}
where the $\limsup$ and  inequality are component-wise; and $B^{(k)}(\alpha, \kappa)$ and $B^{(\infty)}(\alpha, \kappa)$ are given in \eqref{eqn:regret_anytime} and \eqref{eqn:regret}, respectively.
\end{theorem}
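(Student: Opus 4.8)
The plan is to reuse the same per-iteration (descent-lemma type) inequality that underlies the regret bound in Theorem~\ref{thm:regret}, but to exploit the dual update rather than the primal one. The starting point is the non-expansiveness of the projection $\proj_{\cD^{(k)}}$ applied to the dual step~\eqref{eq:dualstep} (with $d=0$ in Case~1): for any fixed $\blambda \in \cD^{(k)}$ one obtains a bound of the form
$\|\blambda^{(k+1)} - \blambda\|_2^2 \le \|\blambda^{(k)} - \blambda\|_2^2 + 2\alpha \bigl(\bg^{(k)}(\hat\by^{(k)})\bigr)^\sfT(\blambda^{(k)} - \blambda) + \alpha^2 \|\bg^{(k)}(\hat\by^{(k)})\|_2^2$. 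The idea is then to telescope this over $\ell = 1,\dots,k$, divide by $k$, and combine it with the primal descent inequality in exactly the way the saddle-point argument for Theorem~\ref{thm:regret} does, so that the cross term $\sum_\ell (\bg^{(\ell)}(\hat\by^{(\ell)}))^\sfT(\blambda^{(\ell)} - \blambda)$ couples to the primal regret $R^{(k)}$. Choosing $\blambda = \mathbf{0}$ (which lies in $\cD^{(k)} = \Lambda_{\alpha,\kappa}$) isolates $\frac{1}{k}\sum_\ell (\bg^{(\ell)}(\hat\by^{(\ell)}))^\sfT \blambda^{(\ell)}$ on one side, while choosing $\blambda = \blambda^\star \alpha^\kappa / \|\cdot\|$ — more precisely, a vector in $\Lambda_{\alpha,\kappa}$ of the form $\alpha^{-\kappa} \bone$ restricted to the (averaged) positive part of the constraint violation — lets the radius $\alpha^{-\kappa}$ of $\cD^{(k)}$ be pulled out as the multiplicative factor $\alpha^\kappa$ in front of the bound.

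Concretely, the key steps in order would be: (1) write the saddle-point inequality $\cL^{(\ell)}(\bx^{(\ell)},\blambda) - \cL^{(\ell)}(\bx^{(*,\ell)},\blambda^{(\ell)}) \le (\text{per-step terms})$ that is already assembled inside the proof of Theorem~\ref{thm:regret}, summed and divided by $k$; (2) on the left-hand side, expand the Lagrangian: $\cL^{(\ell)}(\bx^{(\ell)},\blambda) = h^{(\ell)}(\bx^{(\ell)}) + \blambda^\sfT \bg^{(\ell)}(\by^{(\ell)}(\bx^{(\ell)}))$ and $\cL^{(\ell)}(\bx^{(*,\ell)},\blambda^{(\ell)}) = h^{(\ell)}(\bx^{(*,\ell)}) + (\blambda^{(\ell)})^\sfT \bg^{(\ell)}(\by^{(\ell)}(\bx^{(*,\ell)})) \le h^{(\ell)}(\bx^{(*,\ell)})$ since $\bg^{(\ell)}(\by^{(\ell)}(\bx^{(*,\ell)})) \le 0$ and $\blambda^{(\ell)} \ge 0$; (3) this gives $\blambda^\sfT \bigl(\frac1k\sum_\ell \bg^{(\ell)}(\by^{(\ell)}(\bx^{(\ell)}))\bigr) \le \frac1k\sum_\ell (h^{(\ell)}(\bx^{(*,\ell)}) - h^{(\ell)}(\bx^{(\ell)})) + (\text{RHS terms}) = -R^{(k)} + B^{(k)}(\alpha,\kappa) - \frac{B}{\alpha k} + \cdots$; after regrouping, the right-hand side is exactly $B^{(k)}(\alpha,\kappa) - R^{(k)}$ up to the terms already absorbed into the constants $K_i$; (4) now take the supremum over $\blambda \in \Lambda_{\alpha,\kappa}$ on the left: since the feasible dual set is a ball of radius $\alpha^{-\kappa}$ intersected with the nonnegative orthant, $\sup_{\blambda} \blambda^\sfT \bv = \alpha^{-\kappa}\|[\bv]_+\|_2 \ge \alpha^{-\kappa}[\bv]_j$ for each component $j$, which yields the component-wise bound $\frac1k\sum_\ell \bg^{(\ell)}(\by^{(\ell)}(\bx^{(\ell)})) \le \alpha^\kappa (B^{(k)}(\alpha,\kappa) - R^{(k)})$; (5) for~\eqref{eqn:constraints_anytime2}, bound $-R^{(k)} = \frac1k\sum_\ell(h^{(\ell)}(\bx^{(*,\ell)}) - h^{(\ell)}(\bx^{(\ell)})) \le \frac1k\sum_\ell \|\nabla h^{(\ell)}\|_2 \|\bx^{(*,\ell)} - \bx^{(\ell)}\|_2 \le F \cdot 2B$ using convexity (or the mean value theorem) together with $\|\bx^{(\ell)}\|_2, \|\bx^{(*,\ell)}\|_2 \le B$ from Assumption~\ref{ass:sets}; (6) finally take $\limsup_{k\to\infty}$ to get~\eqref{eqn:constraints} and~\eqref{eqn:constraints2}, noting $\limsup(-R^{(k)}) = -\liminf R^{(k)}$ and $B/(\alpha k)\to 0$.

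The main obstacle I expect is step~(3)–(4): getting the bookkeeping exactly right so that the right-hand side collapses precisely to $B^{(k)}(\alpha,\kappa) - R^{(k)}$ rather than to that expression plus extra slack. This requires that the per-step bound used for Theorem~\ref{thm:regret} was derived with a \emph{free} dual comparator $\blambda$ kept symbolic all the way through (so that the dual-iterate telescoping term $\|\blambda^{(1)} - \blambda\|_2^2 - \|\blambda^{(k+1)} - \blambda\|_2^2 \le \|\blambda^{(1)} - \blambda\|_2^2 \le (2\alpha^{-\kappa})^2$ is what appears, and all the $\alpha$-dependent and $e_y$-dependent terms match the definitions of $K_1,\dots,K_6$ verbatim). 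In other words, Theorem~\ref{thm:regret}'s bound must essentially have been proven as ``$R^{(k)} + \alpha^{-\kappa}\cdot(\text{avg.\ constraint violation}) \le B^{(k)}(\alpha,\kappa)$'' with the second term dropped because it is nonnegative — one then simply refrains from dropping it. Verifying that the shared lemma is stated in this sufficiently general form, and that the constant $K_6(\alpha)(D+B)/\alpha$ indeed accounts for both the primal iterate drift $\sigma^{(\ell)}$ and the dual radius $D$ simultaneously, is the delicate part; everything after that is the routine estimate in step~(5) and passing to the limit in step~(6).
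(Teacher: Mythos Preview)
Your proposal is correct and follows essentially the same route as the paper. The paper reuses the per-step saddle-point inequality (Lemma~\ref{lem:inst_bound}, which holds for any free $\blambda \in \Lambda_{\alpha,\kappa}$), expands the Lagrangian and drops $(\blambda^{(\ell)})^\sfT \bg^{(\ell)}(\by^{(\ell)}(\bx^{(*,\ell)})) \le 0$ by feasibility, and then---rather than taking the supremum over $\blambda$ as you do---simply plugs in the coordinate vector $\blambda_j := \alpha^{-\kappa} \be_j \in \Lambda_{\alpha,\kappa}$ to extract the $j$-th component directly; your supremum argument and this explicit choice are equivalent, and the bound $-R^{(k)} \le 2FB$ via convexity and Assumption~\ref{ass:sets} is identical. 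Your bookkeeping worry about the dual telescoping term $\|\blambda^{(1)} - \blambda\|_2^2$ is legitimate but benign: with $\blambda^{(1)} = 0$ it is at most $\alpha^{-2\kappa}$, which is absorbed (at the same level of precision the paper uses) into the $O(1/(\alpha k))$ transient term of $B^{(k)}(\alpha,\kappa)$.
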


The following remarks are in order. 

\begin{remark} \label{rem:Dk}
It is a standard procedure in the analysis of primal-dual methods to establish boundedness of the dual iterates and to  project the dual variables onto a compact set containing the optimal ones; see, for example,~\cite{Koshal11,OzdaglarSaddlePoint09}.  A  different approach was taken in~\cite{TianyiChen2017}, where additional assumptions were imposed to guarantee the boundedness of the dual iterates to prove regret bounds. We defined the set $\cD^{(k)}$ as in~\eqref{eq:Lambda} for mathematical tractability, and to obtain sharper dynamic regret and constraint violation bounds. We note that for the $Q$-linear convergence analysis of Section \ref{sec:convergenceSaddlePoint}, it is necessary for $\cD^{(k)}$ to include the optimal dual variables; on the other hand, the regret analysis of this section
does not require $\cD^{(k)}$ to include the optimal dual variables. Relative to~\cite{TianyiChen2017},~\eqref{eq:Lambda} has merits because it allows to avoid additional assumptions on the constraint functions.     
\end{remark}

\begin{remark}
Since the right-hand-side of \eqref{eqn:constraints_anytime2} and \eqref{eqn:constraints2} is positive, the bounds of Theorem \ref{thm:violation} also apply to the actual average constraint violation, namely to
\[
\max  \left \{0, \frac{1}{k} \sum_{\ell=1}^k \bg^{(\ell)}(\by^{(\ell)}(\bx^{(\ell)}))  \right\};
\]
see \cite{TianyiChen2017} for a similar definition.
\end{remark}

\begin{remark} \label{rem:kappa}
The optimal choice of the parameter $\kappa$ is in general hard to obtain due to the complicated dependency of the terms $K_4(\alpha)$ and $K_5(\alpha)$ on $\kappa$. Ignoring the terms corresponding to $e_y$, the optimal choice is $\kappa = \frac{1}{3}$. Indeed, the dominating term in \eqref{eqn:regret} is $K_3 \alpha^{1 - 2 \kappa}$, and the dominating term in \eqref{eqn:constraints} is $2 F B \alpha^\kappa$. Therefore, asymptotically, for $\alpha \rightarrow 0$, the optimum is obtained when $1 - 2 \kappa = \kappa$. 
\end{remark}

\begin{remark} \label{rem:comp}
The definition of dynamic regret utilized in \eqref{eqn:regret_def} is with respect to the optimal sequence $\{\bx^{(*, k)} \}$. However, the results of Theorems \ref{thm:regret} and \ref{thm:violation} hold also for \emph{any} comparator (or reference) sequence $\{\tilde{\bx}^{(k)} \}$, as is for example in \cite{zinkevich}. In that case, $\sigma^{(k)} := \|\tilde{\bx}^{(k+1)} - \tilde{\bx}^{(k)} \|_2 $ captures the temporal variability of the comparator sequence.
\end{remark}

\begin{remark} \label{rem:bounds}
Note that in the error-free case ($e_y = 0$) and when the variability of the comparator sequence [cf.~Remark \ref{rem:comp}] is bounded, namely
\[
\sum_{\ell=1}^k \sigma^{(\ell)} \leq B_\sigma, \, \forall k
\]
for some $B_\sigma < \infty$, the obtained results are similar in spirit to the classical dynamic regret bounds (e.g., in \cite{zinkevich}). In particular, taking $\kappa = \frac{1}{3}$ as in Remark \ref{rem:kappa}, it follows from \eqref{eqn:regret_anytime} and \eqref{eqn:constraints_anytime} that
\begin{align*}
&R^{(k)} \leq \frac{1}{\alpha k} (B + B_\sigma (D + B)) + K_1 \alpha + K_2 \alpha^{\frac{2}{3}} + K_3 \alpha^{\frac{1}{3}}
\end{align*}
\begin{align*}
&\frac{1}{k} \sum_{\ell=1}^k \bg^{(\ell)}(\by^{(\ell)}(\bx^{(\ell)})) \leq \frac{1}{\alpha^{\frac{2}{3}} k} (B + B_\sigma (D + B)) \\
&\qquad \qquad \qquad + K_1 \alpha^{\frac{4}{3}} + K_2 \alpha + K_3 \alpha^{\frac{2}{3}} + 2 F B\alpha^{\frac{1}{3}}.
\end{align*}
Therefore, using a standard choice of 
$\alpha := \frac{1}{k^\beta}$ for some $0 < \beta < 1$, one would obtain
\begin{align*}
&R^{(k)} \leq \frac{1}{k^{1-\beta}} (B + B_\sigma (D + B)) + \frac{K_1}{k^\beta} + \frac{K_2} {k^{\frac{2}{3} \beta}} + \frac{K_3}{k^{\frac{1}{3} \beta}}, \\
&\frac{1}{k} \sum_{\ell=1}^k \bg^{(\ell)}(\by^{(\ell)}(\bx^{(\ell)})) \leq \frac{1}{k^{1 - \frac{2}{3} \beta}} (B + B_\sigma (D + B)) \\
&\qquad \qquad \qquad + \frac{K_1}{k^{\frac{4}{3} \beta}} + \frac{K_2}{k^\beta} +\frac{K_3}{k^{\frac{2}{3} \beta}} + \frac{2 F B}{k^{\frac{1}{3} \beta}}.
\end{align*}
Since the dominating terms are $\frac{1}{k^{1 - \beta}}$ and $\frac{1}{k^{\frac{1}{3} \beta}}$, if $\beta$ satisfies $1 - \beta = \frac{1}{3} \beta$, (namely $\beta = \frac{3}{4}$), then one achieves the best convergence rate \emph{simultaneously} for dynamic regret and constraint violation of $O\big(1 / k^{\frac{1}{4}}\big)$. Note that this convergence rate is inferior to the optimal regret bound of $O\big(1 / \sqrt{k} \big)$ known in the literature for the standard online convex optimization algorithms.
There is an evidence that by modifying the primal-dual algorithm and imposing slightly stronger assumptions, the optimal regret bound can be obtained for the error-free case.  For example, it was recently shown in \cite{TianyiChen2017}, that a modified primal-dual algorithm (with modified primal step) leads to optimal regret bounds under some additional assumptions on the constraint function (see Theorem 1 and the requirement on the denominator in equation (11) in \cite{TianyiChen2017}). We would like to stress that our main goal here is to introduce the concept of ``closed-loop'' optimization of systems (via measurement feedback), and analyze the most natural algorithm \emph{both in terms of regret and Q-linear convergence of the optimizer.} Modifying the algorithm and assumptions to optimize the regret bound might not necessarily lead to an algorithm with better asymptotic error for the time-varying scenario. In any case, these questions remain a subject of future research.

\end{remark}

The ensuing section will consider the case of regularized Larangian functions.

\section{Tracking of Time-varying Saddle Points}
\label{sec:convergenceSaddlePoint}

Let $p > 0$ and $d > 0$, and consider re-writing the algorithmic steps~\eqref{eq:primalDualFeedback} in the following compact form:
\begin{align}
\bz^{(k+1)} & = \proj_{\cX^{(k)} \times \cD^{(k)}}\big\{\bz^{(k)} - \alpha \hat{\bphi} (\bz^{(k)}) \big \}, \label{eq:step_tracking}
\end{align}
with the time-varying map $\hat{\bphi}: \cX^{(k)} \times  \cD^{(k)}  \rightarrow \mathbb{R}^n \times \mathbb{R}^M$ is defined as 
\begin{equation}
\label{phi_mapping}
 \hat{\bphi}^{(k)}: \bz \mapsto 
 \left[\begin{array}{c}
 \hat{\nabla}_{\bx}  \cL_{p,d}^{(k)}(\bz) \\
 - \bg^{(k)}(\hat{\by}^{(k)}) + d \blambda 
\end{array}
\right],
\end{equation}
and where, similarly to~\eqref{eq:gradXApprox}, $\hat{\nabla}_{\bx}  \cL_{p,d}^{(k)}(\bz)$ is the approximate gradient of the regularized Lagrangian function calculated as:
\begin{align}
 \hat{ \nabla}_\bx \cL_{p,d}^{(k)} (\bz) & = \hat{ \nabla}_\bx \cL^{(k)} (\bz) - p \bx  \label{eq:gradLambdaApprox2}
 \end{align}

Similar to the previous section, let $\nabla_\bx \cL_{p,d}^{(k)} (\bz) = \nabla_\bx \cL^{(k)} (\bz) - p \bx$ be the gradient of the regularized Lagrangian evaluated at $\bz := (\bx, \blambda)$ and at the the synthetic output $\by^{(k)}(\bx)$. Using $\nabla_\bx \cL_{p,d}^{(k)} (\bz)$, let $\bphi^{(k)}: \cX^{(k)} \times  \cD^{(k)}  \rightarrow \mathbb{R}^n \times \mathbb{R}^M$ be the counterpart of $\hat{\bphi}^{(k)}$ when the model $\by^{(k)}(\bx)$ is utilized; that is, 
\begin{equation}
\label{phi_mapping_true}
\bphi^{(k)}: \bz \mapsto 
 \left[\begin{array}{c}
 \nabla_{\bx}  \cL_{p,d}^{(k)}(\bz) \\
 - \bg^{(k)}(\by^{(k)}(\bx)) + d \blambda 
\end{array}
\right],
\end{equation}
Replacing $\hat{\bphi}^{(k)}$ with $\bphi^{(k)}$ in~\eqref{eq:step_tracking} yields a \emph{feed-forward} online algorithm, as discussed in Section~\ref{sec:introduction}. 

Recall that $\bar{\sigma}^{(k)} := \|\bz^{(*, k+1)} - \bz^{(*, k)}\|_2$. The main results are stated next.

\begin{theorem}
\label{theorem.inexact}
Consider the sequence $\{\bz^{(k)}\}$ generated by the algorithm~\eqref{eq:primalstep}--\eqref{eq:dualstep}. Under Assumptions \ref{ass:constqualification}, \ref{ass:sets}, \ref{ass:cost}, \ref{ass:nonlinearconstr}, and \ref{ass:error},  the distance between $\bz^{(k)}$ and the optimizer $\bz^{(*,k)}$ of~\eqref{eq:minmax} at time $k$ can be bounded as:
\begin{align} 
\|\bz^{(k)} - \bz^{(*,k)}\|_2 \leq & c(\alpha)^k \|\bz^{(0)} - \bz^{(*,0)}\|_2 + \sum_{\ell = 0}^{k-1} c(\alpha)^\ell \bar{\sigma}^{(k-\ell-1)}  \nonumber \\ 
& \hspace{-2.5cm} + \sum_{\ell = 0}^{k-1} c(\alpha)^\ell \left(\alpha \|\bphi(\bz^{(k-\ell-1)}) - \hat{\bphi} (\bz^{(k-\ell-1)}) \|_2 \right)
\label{eqn:inst_bound}
\end{align}
with $c(\alpha)$ given by 
\begin{align}
c(\alpha) := & [1 - 2 \alpha \eta_\phi + \alpha^2 L_\phi^2]^\frac{1}{2}  \label{eq:kappa}
\end{align} 
where 
\begin{align}
\eta_{\phi} & := \min\{p, d\} \\
L_\phi & := \sqrt{(L + p + M_g + \xi_\lambda L_g)^2 + (M_g + d)^2} 
\end{align}
and $L_g := \sqrt{\sum_{m = 1}^{M_I} L_{g_m}^2}$, $\xi_\lambda := \sup_k \max_{\blambda \in \cD^{(k)}} \|\blambda\|_2$.
\end{theorem}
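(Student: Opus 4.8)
The plan is to regard the update~\eqref{eq:step_tracking} as an \emph{inexact} projected-gradient iteration for a time-varying strongly monotone operator, and to bound each iterate against the fixed-point characterization of the optimizer $\bz^{(*,k)}$. Write $\cC^{(k)} := \cX^{(k)} \times \cD^{(k)}$. The first ingredient is a variational-inequality (VI) characterization of $\bz^{(*,k)}$: since $\cL^{(k)}_{p,d}$ is $p$-strongly convex in $\bx$ over $\cX^{(k)}$ and $d$-strongly concave in $\blambda$ over $\cD^{(k)} \subseteq \reals_+^M$, the minimax problem~\eqref{eq:minmax} has a unique saddle point $\bz^{(*,k)} \in \cC^{(k)}$, which coincides with the unique solution of the VI $\bphi^{(k)}(\bz^{(*,k)})^\sfT (\bz - \bz^{(*,k)}) \ge 0$ for all $\bz \in \cC^{(k)}$; equivalently, for every $\alpha > 0$, $\bz^{(*,k)} = \proj_{\cC^{(k)}}\{\bz^{(*,k)} - \alpha\,\bphi^{(k)}(\bz^{(*,k)})\}$. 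Here I use that $\cD^{(k)}$ is taken large enough to contain the optimal multipliers (bounded by Slater's condition, Assumption~\ref{ass:constqualification}; cf.\ Remark~\ref{rem:Dk}), so that the projection does not truncate the optimizer.

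The second ingredient is a per-step contraction estimate. Subtracting the update~\eqref{eq:step_tracking} from the fixed-point identity, using nonexpansiveness of the Euclidean projection onto the convex set $\cC^{(k)}$, and inserting $\pm\,\alpha\bphi^{(k)}(\bz^{(k)})$ together with the triangle inequality, one obtains
\begin{align*}
\|\bz^{(k+1)} - \bz^{(*,k)}\|_2
&\le \big\| \bz^{(k)} - \alpha\bphi^{(k)}(\bz^{(k)}) - \bz^{(*,k)} + \alpha\bphi^{(k)}(\bz^{(*,k)}) \big\|_2 \\
&\quad {} + \alpha\,\big\| \bphi^{(k)}(\bz^{(k)}) - \hat{\bphi}^{(k)}(\bz^{(k)}) \big\|_2 .
\end{align*}
For the first term I invoke the standard estimate: if an operator $F$ is $\eta$-strongly monotone and $L$-Lipschitz, then $\| (\bz - \alpha F(\bz)) - (\bz' - \alpha F(\bz')) \|_2^2 \le (1 - 2\alpha\eta + \alpha^2 L^2)\,\|\bz - \bz'\|_2^2$ (expand the square; bound the cross term via strong monotonicity and the quadratic term via the Lipschitz property). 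Applied with $F = \bphi^{(k)}$, $\eta = \eta_\phi$, $L = L_\phi$, this gives $\|\bz^{(k+1)} - \bz^{(*,k)}\|_2 \le c(\alpha)\,\|\bz^{(k)} - \bz^{(*,k)}\|_2 + \alpha\,\|\bphi^{(k)}(\bz^{(k)}) - \hat{\bphi}^{(k)}(\bz^{(k)})\|_2$ with $c(\alpha)$ as in~\eqref{eq:kappa}. Combining with the triangle inequality $\|\bz^{(k+1)} - \bz^{(*,k+1)}\|_2 \le \|\bz^{(k+1)} - \bz^{(*,k)}\|_2 + \bar{\sigma}^{(k)}$ yields a scalar recursion, and unrolling it from $k$ down to $0$ (with an appropriate re-indexing of the summation) produces exactly~\eqref{eqn:inst_bound}.

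It remains to verify the two properties of $\bphi^{(k)}$ used above. \emph{Strong monotonicity}: decompose $\bphi^{(k)}(\bz) = \bphi^{(k)}_0(\bz) + (p\bx, d\blambda)$, where $\bphi^{(k)}_0(\bz) := (\nabla_\bx \cL^{(k)}(\bz),\, -\bg^{(k)}(\by^{(k)}(\bx)))$ is the saddle-point operator of the unregularized, convex-concave Lagrangian $\cL^{(k)}$ and is therefore monotone (the primal--dual coupling is skew and cancels in $(\bphi^{(k)}_0(\bz) - \bphi^{(k)}_0(\bz'))^\sfT(\bz - \bz')$), while $(p\bx, d\blambda)$ contributes at least $\min\{p,d\}\,\|\bz - \bz'\|_2^2$; hence $\bphi^{(k)}$ is $\eta_\phi = \min\{p,d\}$-strongly monotone. \emph{Lipschitz continuity}: bound each block separately over $\cX^{(k)} \times \cD^{(k)}$, using Assumption~\ref{ass:cost} ($\nabla_\bx f^{(k)}$ is $L$-Lipschitz), Assumption~\ref{ass:nonlinearconstr} ($\|\bJ^{(k)}\|_2 \le M_g$ and $\bJ^{(k)}$ is Lipschitz, with $L_g := \sqrt{\sum_{m=1}^{M_I} L_{g_m}^2}$ as in the statement), the bound~\eqref{eq:lip_g}, the regularization moduli $p$ and $d$, and the uniform dual bound $\|\blambda\|_2 \le \xi_\lambda$ on $\cD^{(k)}$ (which is what makes the bilinear term $(\bJ^{(k)}(\by^{(k)}(\bx))\bC)^\sfT\blambda$ Lipschitz in $\bx$); collecting the primal-direction modulus as $L + p + M_g + \xi_\lambda L_g$ and the dual-direction modulus as $M_g + d$ and combining blocks gives $L_\phi = \sqrt{(L + p + M_g + \xi_\lambda L_g)^2 + (M_g + d)^2}$.

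The main obstacle I anticipate is the Lipschitz bound on $\bphi^{(k)}$, and specifically the bilinear term $(\bJ^{(k)}(\by^{(k)}(\bx))\bC)^\sfT\blambda$: its Lipschitz modulus in $\bx$ is finite only because $\cD^{(k)}$ is bounded (Assumption~\ref{ass:sets} and the choice of $\cD^{(k)}$) and $\bJ^{(k)}$ has a Lipschitz gradient (Assumption~\ref{ass:nonlinearconstr}), while its modulus in $\blambda$ relies on $\|\bJ^{(k)}\|_2 \le M_g$; keeping these constants consistent with the compactness constants is the delicate bookkeeping. By contrast, the strong-monotonicity claim is immediate once the skew structure of the saddle operator is recognized, and the unrolling of the recursion is routine provided the three heterogeneous perturbations (the geometric decay of the initial error, the optimizer drift $\bar{\sigma}^{(k)}$, and the measurement perturbation $\alpha\|\bphi^{(k)} - \hat{\bphi}^{(k)}\|_2$) are indexed correctly. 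I would not attempt to optimize $\alpha$: the bound~\eqref{eqn:inst_bound} holds for every $\alpha > 0$, and it becomes a genuine tracking (bounded-asymptotic-error) statement precisely when $\alpha$ is small enough that $c(\alpha) < 1$, i.e.\ $0 < \alpha < 2\eta_\phi / L_\phi^2$.
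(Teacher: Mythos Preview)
Your proposal is correct and follows essentially the same route as the paper: the fixed-point characterization of $\bz^{(*,k)}$ via the projected VI, nonexpansiveness of the projection, insertion of $\pm\alpha\bphi^{(k)}(\bz^{(k)})$ with the triangle inequality, the contraction estimate $c(\alpha)$ from strong monotonicity and Lipschitz continuity of $\bphi^{(k)}$ (which the paper packages as a separate lemma citing~\cite{Koshal11}), the drift triangle inequality, and unrolling. Your sketch of the strong-monotonicity and Lipschitz verification is in fact more explicit than the paper's, which simply refers to~\cite[Lemma~3.4]{Koshal11}.
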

\emph{Proof.} See Appendix~\ref{sec:proof_linear}.

\vspace{.2cm}

Theorem~\ref{theorem.inexact} provides a bound on $\|\bz^{(k)} - \bz^{(*,k)}\|_2$ per each time instant $k \in \mathbb{N}$. Asymptotic bounds are established next.  

But first, notice that the term $\|\bphi(\bz^{(k-\ell-1)}) - \hat{\bphi} (\bz^{(k-\ell-1)}) \|_2$ is due to the errors in the computation of gradients~\cite{Bertsekas1999,Larsson03} that one commits by ``closing the loop''; i.e., by replacing the model $\by^{(k)}(\bx^{(k)})$ with the measurements $\hat{\by}^{(k)}$. The term $\|\bphi(\bz^{(k-\ell-1)}) - \hat{\bphi} (\bz^{(k-\ell-1)}) \|_2$ is shown to be bounded next.

\begin{theorem} \label{cor:asym}
Suppose that there exists a scalar $\bar{\sigma} < + \infty$ such that $\sup_{k \geq 1} \bar{\sigma}^{(k)} \leq \bar{\sigma}$, and $c(\alpha) < 1$. Then, the sequence  $\{\bz^{(k)}\}$ converges Q-linearly to $\{\bz^{(*,k)}\}$ up to an asymptotic error bound given by:
\begin{align} 
\limsup_{k\to\infty} \|\bz^{(k)} - \bz^{(*, k)}\|_2 &\leq \frac{\alpha \sqrt{e_p^2 + e_d^2} + \bar{\sigma}}{1 - c(\alpha)}  \label{eqn:asym_bound_apr}
\end{align}
where 
\begin{align}
e_p & \leq \left(L_0 + M_\lambda M_I \max_{j = 1, \ldots, M_I}\{L_{g_j}\}\right) \|\bC\|_2 e_y \\
e_d & \leq M_g e_y \, ,
\end{align}
with $M_{\lambda} := \sup_{k \geq 1} \max_{\blambda \in \cD^{(k)}} \|\blambda\|_1$\, . 
\end{theorem}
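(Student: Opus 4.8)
The plan is to apply Theorem~\ref{theorem.inexact} and bound its right-hand side under the additional hypotheses. Since $c(\alpha)<1$, the geometric factor $c(\alpha)^k\|\bz^{(0)}-\bz^{(*,0)}\|_2$ vanishes as $k\to\infty$, which already establishes the $Q$-linear convergence claim; what remains is to bound the two sums. For the ``drift'' term, using $\bar\sigma^{(k)}\le\bar\sigma$ uniformly gives $\sum_{\ell=0}^{k-1}c(\alpha)^\ell\bar\sigma^{(k-\ell-1)}\le\bar\sigma\sum_{\ell=0}^{\infty}c(\alpha)^\ell=\bar\sigma/(1-c(\alpha))$. For the ``measurement-error'' term I would show that $\|\bphi^{(k)}(\bz)-\hat{\bphi}^{(k)}(\bz)\|_2$ is uniformly bounded by $\sqrt{e_p^2+e_d^2}$ for all $k$ and all $\bz\in\cX^{(k)}\times\cD^{(k)}$, so that the same geometric-series argument yields $\sum_{\ell=0}^{k-1}c(\alpha)^\ell\bigl(\alpha\|\bphi(\cdot)-\hat{\bphi}(\cdot)\|_2\bigr)\le\alpha\sqrt{e_p^2+e_d^2}/(1-c(\alpha))$. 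Combining the three pieces and taking $\limsup_{k\to\infty}$ gives exactly \eqref{eqn:asym_bound_apr}.

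The technical heart is therefore the componentwise bound on $\bphi^{(k)}-\hat{\bphi}^{(k)}$. From the definitions \eqref{phi_mapping}, \eqref{phi_mapping_true} and \eqref{eq:gradLambdaApprox2}, the difference in the dual block is $\bg^{(k)}(\hat{\by}^{(k)})-\bg^{(k)}(\by^{(k)}(\bx))$, whose norm is at most $M_g\,\|\hat{\by}^{(k)}-\by^{(k)}(\bx^{(k)})\|_2\le M_g e_y=:e_d$ by the Lipschitz estimate \eqref{eq:lip_g} and Assumption~\ref{ass:error}. The primal block difference is $\hat{\nabla}_\bx\cL^{(k)}(\bz)-\nabla_\bx\cL^{(k)}(\bz)$ (the $-p\bx$ terms cancel); from \eqref{eq:gradXApprox} this equals $\bC^\sfT\bigl(\nabla f_0^{(k)}(\hat{\by}^{(k)})-\nabla f_0^{(k)}(\by^{(k)}(\bx))\bigr)+\bC^\sfT\bigl(\bJ^{(k)}(\hat{\by}^{(k)})-\bJ^{(k)}(\by^{(k)}(\bx))\bigr)^\sfT\blambda$. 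The first summand is bounded by $\|\bC\|_2 L_0 e_y$ via Assumption~\ref{ass:cost}; for the second, I would bound $\|(\bJ^{(k)}(\hat{\by})-\bJ^{(k)}(\by))^\sfT\blambda\|_2$ by using that only the first $M_I$ (nonlinear) rows of $\bJ^{(k)}$ vary — each with Lipschitz gradient constant $L_{g_j}$ — so the perturbation of the $j$-th row is at most $L_{g_j}e_y$, and summing against $\blambda$ gives a factor $M_\lambda M_I\max_j L_{g_j}$; multiplying by $\|\bC\|_2$ yields the stated $e_p$. Taking the Euclidean norm of the stacked two-block difference gives $\|\bphi^{(k)}-\hat{\bphi}^{(k)}\|_2\le\sqrt{e_p^2+e_d^2}$, uniformly in $k$ by Assumptions~\ref{ass:sets}, \ref{ass:cost}, \ref{ass:nonlinearconstr}, \ref{ass:error}.

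The main obstacle I anticipate is getting the exact constant in $e_p$ right — specifically, correctly relating $\|(\bJ^{(k)}(\hat\by)-\bJ^{(k)}(\by))^\sfT\blambda\|_2$ to $\max_j L_{g_j}$, $M_I$, and $\|\blambda\|_1$ rather than $\|\blambda\|_2$. One has to be careful that the row-wise Lipschitz bounds $L_{g_j}$ control $\|\nabla g_j^{(k)}(\hat\by)-\nabla g_j^{(k)}(\by)\|_2$, and then pass from a sum $\sum_{j=1}^{M_I}|\lambda_j|\,L_{g_j}e_y\le e_y\max_j L_{g_j}\sum_{j=1}^{M_I}|\lambda_j|\le e_y\max_j L_{g_j}\,M_\lambda$ to the matrix-norm statement; bounding $\sum_{j\le M_I}|\lambda_j|$ by $M_I\max_j|\lambda_j|$ or directly by $\|\blambda\|_1\le M_\lambda$ is where a slightly loose but clean constant enters. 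A secondary (routine) point is justifying that the per-iteration bound of Theorem~\ref{theorem.inexact} may be evaluated along the actual trajectory, since $\bz^{(k-\ell-1)}\in\cX^{(k-\ell-1)}\times\cD^{(k-\ell-1)}$ by construction of the projection steps, so the uniform bound on $\bphi-\hat{\bphi}$ applies at each term of the sum. Everything else is the geometric-series bookkeeping described above.
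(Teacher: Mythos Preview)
Your proposal is correct and follows essentially the same route as the paper. The only cosmetic difference is that the paper returns to the one-step recursion derived inside the proof of Theorem~\ref{theorem.inexact} (namely $\|\bz^{(k)}-\bz^{(*,k)}\|_2\le c(\alpha)\|\bz^{(k-1)}-\bz^{(*,k-1)}\|_2+\bar\sigma+\alpha\sqrt{e_p^2+e_d^2}$) and then invokes the geometric-series formula, whereas you start from the already-unrolled bound~\eqref{eqn:inst_bound} and sum term by term; the componentwise estimate on $\bphi^{(k)}-\hat{\bphi}^{(k)}$ you outline (including the triangle-inequality split into the $\nabla f_0$ part and the Jacobian part, and the passage via $\sum_{j\le M_I}|\lambda_j|L_{g_j}$ to $M_\lambda M_I\max_j L_{g_j}$) is exactly the content of the paper's auxiliary Lemma~\ref{lem:errorPhi}.
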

\emph{Proof.} See Appendix~\ref{sec:proof_linear}.

\vspace{.2cm}

The coefficient $c(\alpha)$ is less then one when $\alpha < 2 \eta_\phi / L_\phi^{2}$. When no measurement errors are present, $e_p = e_d = 0$ and~\eqref{eqn:asym_bound_apr} provides a result for feed-forward online algorithms (similar to e.g.,~\cite{SimonettoGlobalsip2014,Simonetto17}). When $\bar{\sigma} = 0$, then the underlying optimization problem is static  and the algorithm converges to the solution of the static optimization problem~\eqref{eq:minmax}. Finally, notice that the result~\eqref{eqn:asym_bound_apr}  can also be interpreted as input-to-state stability result, where the optimal trajectory $\{\bz^{(*,k)}\}$ of the time-varying minimax problem~\eqref{eq:minmax} is taken as a reference.

The results rely on the fact that the map $\bphi^{(k)}(\bz)$ is strongly monotone over $\cX^{(k)} \times \cD^{(k)}$ with constant $\eta_{\phi}$ and Lipschitz over $\cX^{(k)} \times \cD^{(k)}$ with coefficient $L_\phi$. A discussion on the cases where $\bphi^{(k)}$ is strongly monotone and Lipschitz follows:
\begin{enumerate}[(i)]
\item Suppose that the function $h^{(k)}$ is convex but  not strongly convex; suppose further that  $p > 0$ and $d > 0$. Then, $\bphi^{(k)}$ is strongly monotone and Lipschitz.
\item When $d > 0$, $p = 0$, and the function $h^{(k)}$ is strongly convex, it is easy to show that $\bphi^{(k)}$ is strongly monotone and Lipschitz. 
\item However, the map  $\bphi^{(k)}(\bz)$  is \emph{not} strongly monotone  in the following cases: iii.a) $d = 0$, irrespective of $p$ and $h^{(k)}$; and, when iii.b) $d > 0$, $p = 0$ and $h^{(k)}$ is not strongly convex. Therefore, strong monotonicity is not  present in \textbf{Case 1}. Strong monotonicity is a key property of the maps $\hat{\bphi}^{(k)}(\bz)$ and $\bphi^{(k)}(\bz)$ that is utilized in the Q-linear convergence analysis; this explains why this paper provided only a regret analysis for \textbf{Case 1}.  
\end{enumerate}

There always  exists a scalar $\bar{\sigma} < + \infty$, since it is assumed that the sets $\cX^{(k)}$ and $\cD^{(k)}$ are compact uniformly in time (and, therefore, optimal solutions are never unbounded) and the solution set is not empty. For $s \rightarrow 0$, $\bar{\sigma}$ is in fact an upper bound on the norm of the gradient of the optimal trajectory $\{\bz^{(*,k)}\}_{k \in \mathbb{N}}$. 

Lastly, regarding the  values of $p$ and $d$, they should  be selected numerically (or analytically, whenever possible) based on specific implementation goals. For example, larger values of $p$ and $d$ lead to larger perturbations of the solution trajectories of the original time-varying optimization problem; therefore, in order not to sacrifice optimality, $p$ and $d$ should be selected as small as possible, while ensuring that $c(\alpha) < 1$. Small values of $p$ and $d$, however, make $c(\alpha)$  close to $1$ [cf.~\eqref{eq:kappa}], thus involving a larger asymptotic bound~\eqref{eqn:asym_bound_apr}. On the other hand, minimizing the asymptotic bound~\eqref{eqn:asym_bound_apr} requires larger values of $p$ and $d$, thus  sacrificing optimality and constraint satisfaction.

\section{Examples of Applications}
\label{sec:Examples}

In this section, two examples of applications of the proposed framework will be outlined.

\subsection{Example in Communication Systems}
\label{sec:comm}

An illustrative example is provided for the flow control problem in communications systems~\cite{Low1999Flow}; the proposed approach can  also be applied to stochastic routing problems and energy-harvesting communication networks~\cite{Calvo18}. Consider a communication network modeled as a directed graph $\cG = (\cN, \cE)$, with $\cN$ the set of nodes and $\cE$ the set of directed edges, which are dictated by (possibly time varying) routing matrix. For a given node $i$, the set of nodes that forward traffic to the $i$-th one is denoted as $\cN_i^\textrm{in} := \{j: (j,i) \in \cE\}$; similarly, define the set $\cN_i^\textrm{out} := \{n: (i,n) \in \cE\}$.  Consider $S \in \mathbb{N}$ traffic flows and let $r_{ij,s} \geq 0$ be the flow on link $(i,j) \in \cE$ for the $s$-th flow and $x_{i,s}$ the traffic generated by a source node ($x_{i,s} \geq 0$) or delivered at a destination node $i$ ($x_{i,s} \leq 0$) for flow $s$.  Let $c^{(k)}_{ij}(p_{ij})$ be the time-varying capacity of link $(i,j) \in \cE$ as a function of the transmission power $p_{ij}$, and $w_{ij}^{(k)}$ the exogenous (i.e., uncontrollable) traffic on the same communication link. 

Consider then the following time-varying problem to maximize the traffic generated (and delivered) and to compute the communication flows:      
\begin{subequations} 
\label{eqn:sampledProblemComm}
\begin{align} 
&\min_{\substack{\bx, \br, \bp}} \sum_{(i,j) \in \cE} \hspace{-.1cm} h_{ij}^{(k)}(\{r_{ij,s}\}) - \sum_s \sum_{i \in \cN} f_{i,s}^{(k)}(x_{i,s}) + \hspace{-.2cm} \sum_{(i,j) \in \cE} \hspace{-.1cm} v_{ij}^{(k)} (p_{ij}) \label{eq:obj_p0_Comm} \\
&\mathrm{s.\,to:~} x_{i,s} \in \cX_{i,s}^{(k)} , \, \forall \, s = 1,\ldots, S,  i \in \cN \label{eqn:constr_Xsampl_Comm} \\
& \hspace{1.0cm} \sum_s r_{ij,s} + w_{ij}^{(k)} \leq c_{ij}^{(k)}(p_{ij}), \, (i,j) \in \cE \label{eq:ineqcapacityComm} \\
& \hspace{1.1cm} x_{j,s} + y_{j,s,\textrm{in}}^{(k)}(\br) \leq y_{j,\textrm{out},s}^{(k)}(\br) , \, j \in \cN, \forall \, s \label{eq:ineqconstComm} 
 \end{align}
\end{subequations}
where $h_{ij}^{(k)}$ is a given convex function capturing costs associated with communication links; $v_{ij}^{(k)}$ is a convex function associated with the transmission power $p_{ij}$; $f_{i,s}^{(k)}$ is a concave utility function associated with the traffic $x_{i,s}$ generated at node $i$ for flow $s$; $\cX_{i,s}^{(k)} := [0, \bar{x}_{i,s}]$ for a given maximum rate $\bar{x}_{i,s} > 0$ if $i$ generates traffic and $\cX_{i,s}^{(k)} := [-\bar{x}_{i,s}, 0]$ if $i$ is a destination node;
 $c_{ij}^{(k)}(p_{ij})$ is the logarithmic function capturing the capacity of the channel $(i,j) \in \cE$; and, $y_{j,s, \textrm{in}}^{(k)}(\br) $, $y_{j,s,\textrm{out}}^{(k)}(\br)$ are defined as
\begin{subequations} 
\begin{align} 
y_{j,s, \textrm{in}}^{(k)}(\br) & := \sum_{i \in \cN_j^\textrm{in}} (w_{ij}^{(k)} + r_{ij,s}  ) \\
y_{j,s, \textrm{out}}^{(k)}(\br) & := \sum_{i \in \cN_j^\textrm{out}} (  w_{ji}^{(k)}  + r_{ji,s} )
\end{align}
\end{subequations}
respectively. In particular,~\eqref{eq:ineqconstComm} is a relaxed version of the flow-conservation constraint~\cite{Calvo18} for each flow $s$.

To design of the the feedback-based algorithm~\eqref{eq:primalDualFeedback}, the Lagrangian function is built by dualizing the constraint~\eqref{eq:ineqcapacityComm} and~\eqref{eq:ineqconstComm}. In the resultant algorithm, the overall flows $y_{j,\textrm{in}}^{(k)}(\br^{(k)})$ and $y_{j,\textrm{out}}^{(k)}(\br^{(k)})$ entering and exiting a node $i$, and the link flows $\sum_s r_{ij,s} + w_{ij}^{(k)}$ and capacity $c_{ij}^{(k)}$ are replaced by measurements. 
Notice that constraint~\eqref{eq:ineqcapacityComm} is  satisfied strictly during the iterations of the algorithm because of physical limits (communication rates cannot exceed the link capacity). 

\subsection{Example in Power Systems} 
\label{sec:opf}

As an example of application in power systems, consider the problem of optimizing in real time the operation of aggregations of distributed energy resources (DERs) located in (a portion of) a  distribution system (e.g., a distribution feeder) \cite{commelec1,opfPursuit,Tang17}. The problem is formulated as a time-varying optimal power flow problem (OPF) and fits the proposed framework in Section \ref{sec:problemformulation} as described next. Particularly, we consider a distribution  network with one slack bus and $N$ $PQ$-buses. We assume, without loss of generality, that a controllable resource is connected at every $PQ$ bus $i = 1, \ldots, N$. The controllable quantity of resource $i$ is given by $\bx_i := (P_i, Q_i)^\sfT \subseteq \reals^2$, where $P_i$ and $Q_i$ are the net active and reactive power injections from resource $i$, respectively.  
The objective function for a photovoltaic (PV) system $i$ is
$f^{(k)}_i(P, Q)= c_p\left(P- \bar{P}^{(k)}_{i}\right)^2+c_q Q^2$, 
where $\bar{P}^{(k)}_{i}$ is the maximum
real power available at PV system $i$ at time step $k$. We set
\begin{align}
\mathcal{X}_i^{(k)}
=\left\{(P,Q):P^2+Q^2\leq S_{i,\max}^2,\ 
0\leq P \leq P^{(k)}_{i} \right\}
\end{align}
where $S_{i,\max}$ is the rated apparent power for the PV system $i$; similar costs and sets are considered for energy storage systems, with the additional constraints on $P$ based on the current state of charge. 

The function $\by_1^{(k)}(\bx) = \bC_1 \bx + \bD_1 \bw^{(k)} \in \reals^{3 N}$ is the \emph{linearized} mapping from power injections to the voltage magnitudes at each node, derived using, e.g., \cite{Baran89}; the vector $\bw^{(k)}$ collects the uncontrollable power injections at every node at time step $k$. 
One engineering constraint involves the voltage magnitudes at each node to be within an interval $[0.95, 1.05]$ p.u., thus defining the following constraints:
\begin{align}
\bg_1^{(k)}(\by_1^{(k)}(\bx)) & \nonumber \\
& \hspace{-1.0cm} = ([0.95 \textbf{1} - \by_1^{(k)}(\bx)]^\sfT, [\by_1^{(k)}(\bx) - 1.05 \textbf{1}]^\sfT)^\sfT.
\end{align}
Furthermore, we consider a map $\by_2^{(k)}(\bx) = \bC_2 \bx + \bD_2 \bw^{(k)} \in \reals^3$ representing a linear map for the active powers at the three phases at the feeder head~\cite{multiphaseArxiv}; we then impose  a constraint of the form 
\begin{align}
\label{eq:setpoint}
\bg_2^{(k)}(\by_2^{(k)}(\bx)) = \|\by_2^{(k)}(\bx) - \by_{\textrm{ref}}^{(k)}\|_2^2 - \epsilon,
\end{align}
where $\by_{\textrm{ref}}^{(k)}$ is a time-varying reference signal for the active powers at the feeder head and $\epsilon$ is a given accuracy.

In the feedback-based algorithm~\eqref{eq:primalDualFeedback}, measurements of voltages $\hat{\by}_1^{(k)}$ and powers $\hat{\by}_2^{(k)}$ replace the respective network maps $\by_1^{(k)}(\bx)$ and $\by_2^{(k)}(\bx)$, respectively.

\section{Illustrative Numerical Results}
\label{sec:results}

\subsection{Real-Time Routing} 
\label{sec:results_comm_systems}

Consider the network in Fig.~\ref{fig:F_comm_net} with 6 nodes and 8 links (see also~\cite{Chen12}), and assume that two traffic flows are generated by nodes $1$ and $4$, and they are received at nodes $3$ and $6$, respectively. The routing matrix is based on the directed edges.

The cost function for the transmission powers $\bp$ is fixed over time and it is set to $\frac{1}{20}\|\bp\|_2^2$, whereas the utility function associated with the flows are set to $f^{(k)}(x_{i,s}) =  \kappa_{i,s}^{(k)} \log(x_{i,s})$, with $\kappa_{i,s}^{(k)} > 0$ a time-varying coefficient; in particular, to test the algorithm under different conditions, $\kappa_{i,s}^{(k)}$ will be perturbed with Gaussian noise, will follow ramps, step-changes, and sinusoidal signals as will be shown shortly. At each time step, the capacity of the communication channels is generated by using a complex Gaussian random variable with mean $1 + j1$ and variance of $0.01$ for both real and imaginary parts; the exogenous flows $w^{(k)}_{i,j}$ are i.i.d. Gaussian random variables with mean $[0.2, 0.3, 0.3, 0.4, 0.5, 0.2, 0.1, 0.4]^\sfT$ and variance $0.05$ (per entry). In the algorithm, the stepsize if set to $\alpha = 0.5$, and the regularization parameters are $r = 0.001$, and $d = 0.001$. The measurement noise is i.i.d. Gaussian across measurements, with zero mean and variance $0.01$.

\begin{figure}[t!]
  \centering
  \includegraphics[width=.7\columnwidth]{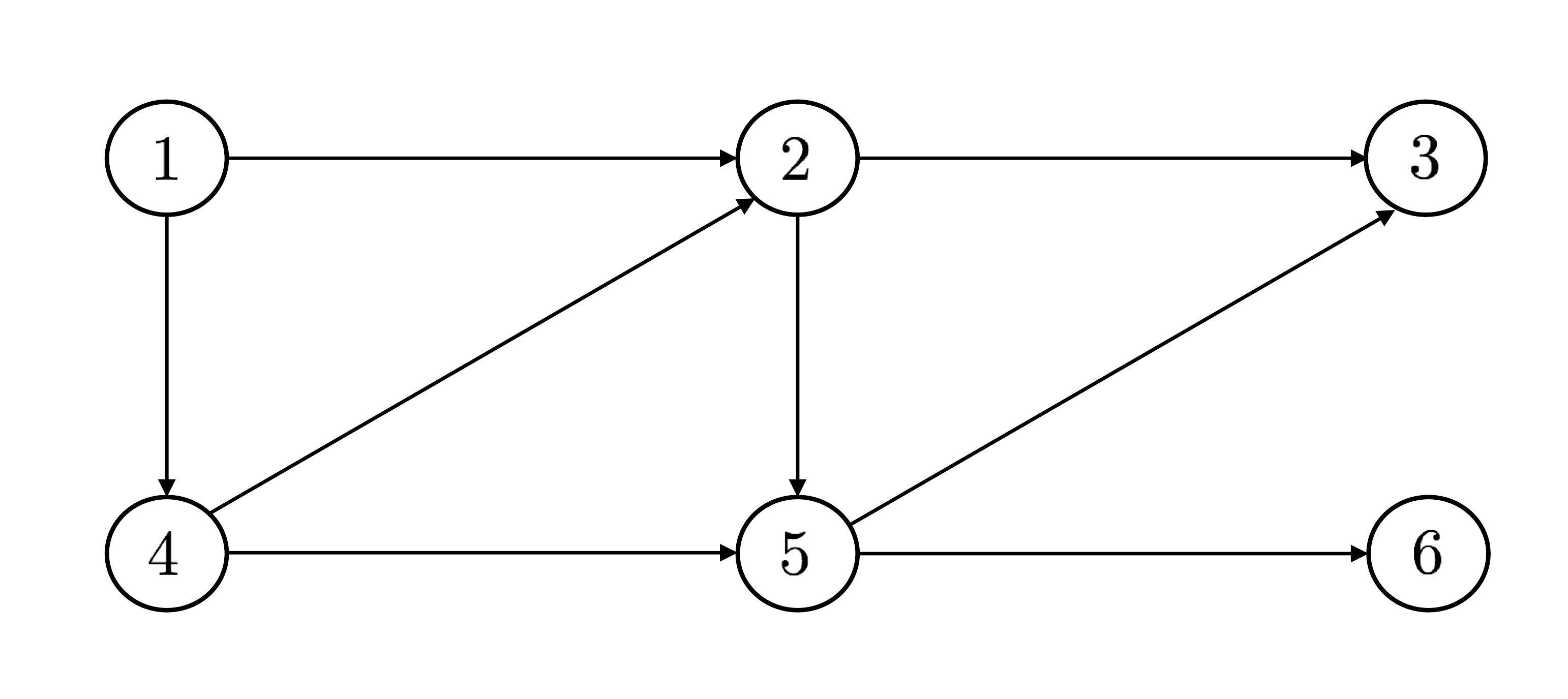}
\vspace{-.4cm}
\caption{Communication network utilized in the numerical results}
\label{fig:F_comm_net}
\vspace{-.2cm}
\end{figure}

\begin{figure}[t!]
  \centering
  \includegraphics[width=1.0\columnwidth]{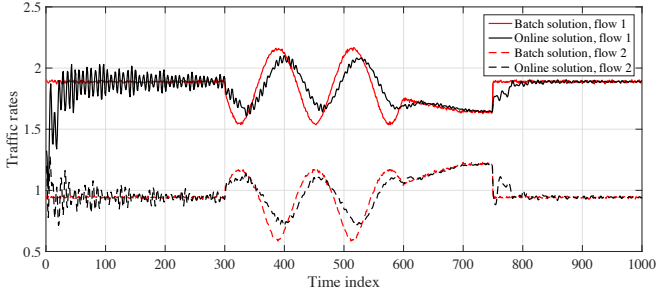}
\vspace{-.6cm}
\caption{Traffic rates: comparison between batch and online solution.} \label{fig:F_traffic_comm}
\vspace{-.2cm}
\end{figure}
\begin{figure}[t!]
  \centering
  \includegraphics[width=1.0\columnwidth]{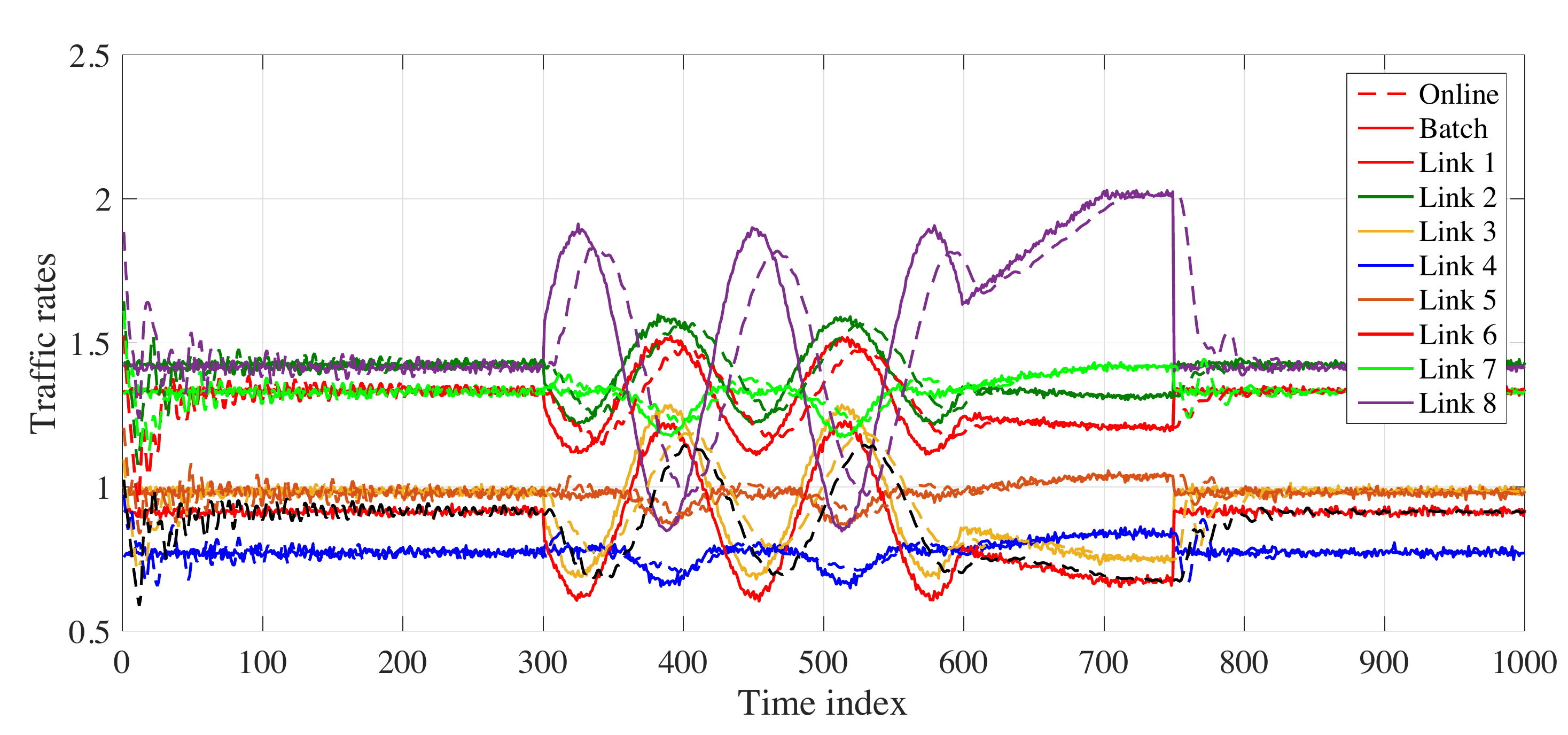}
\vspace{-.6cm}
\caption{Transmit powers: comparison between batch and online solution.} \label{fig:F_powers_comm}
\vspace{-.2cm}
\end{figure}

\begin{figure}[t!]
  \centering
  \includegraphics[width=1.0\columnwidth]{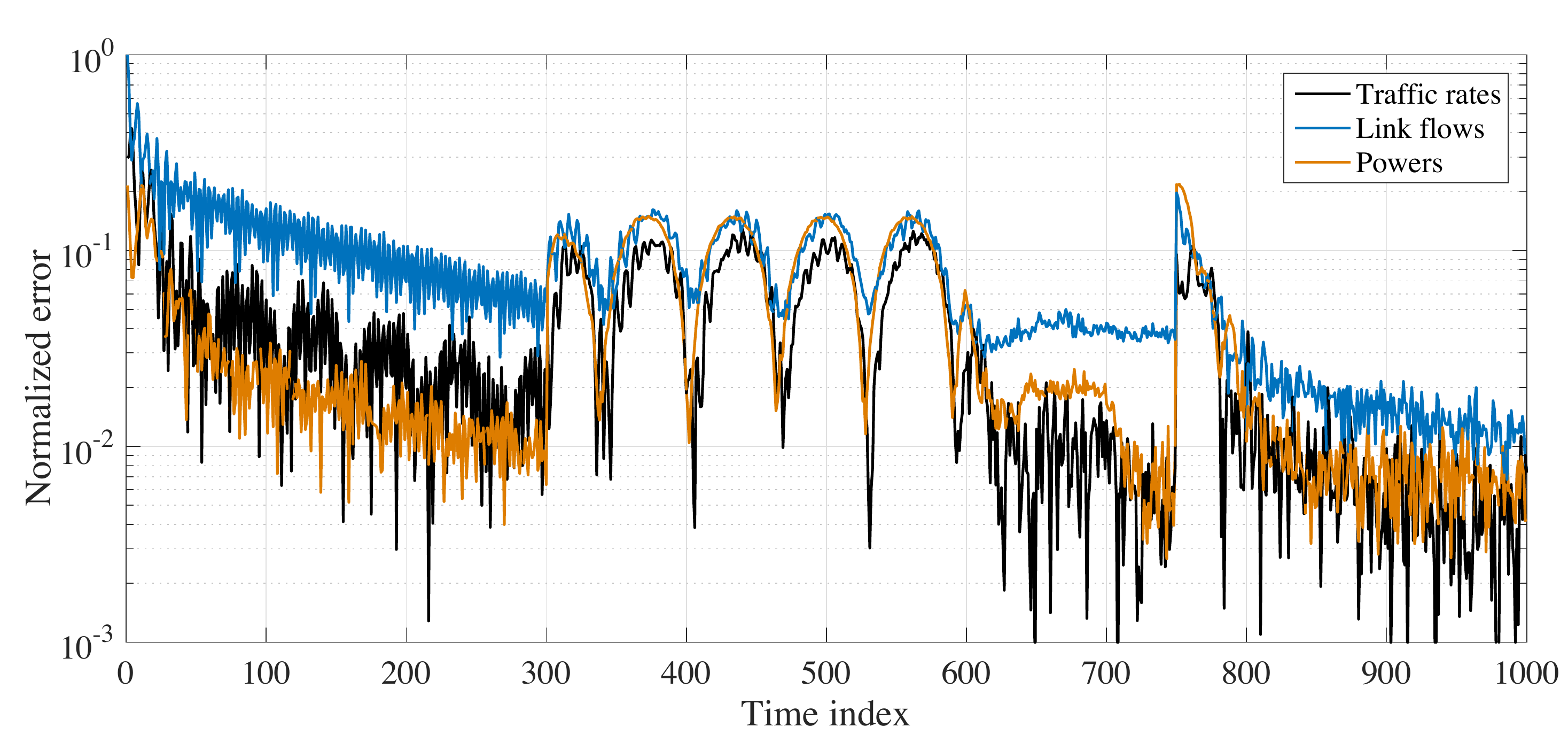}
\vspace{-.6cm}
\caption{Normalized error for traffic rates and link flows for the online algorithm.} \label{fig:F_error_tracking}
\vspace{-.2cm}
\end{figure}

\begin{figure}[t!]
  \centering
  \includegraphics[width=1.0\columnwidth]{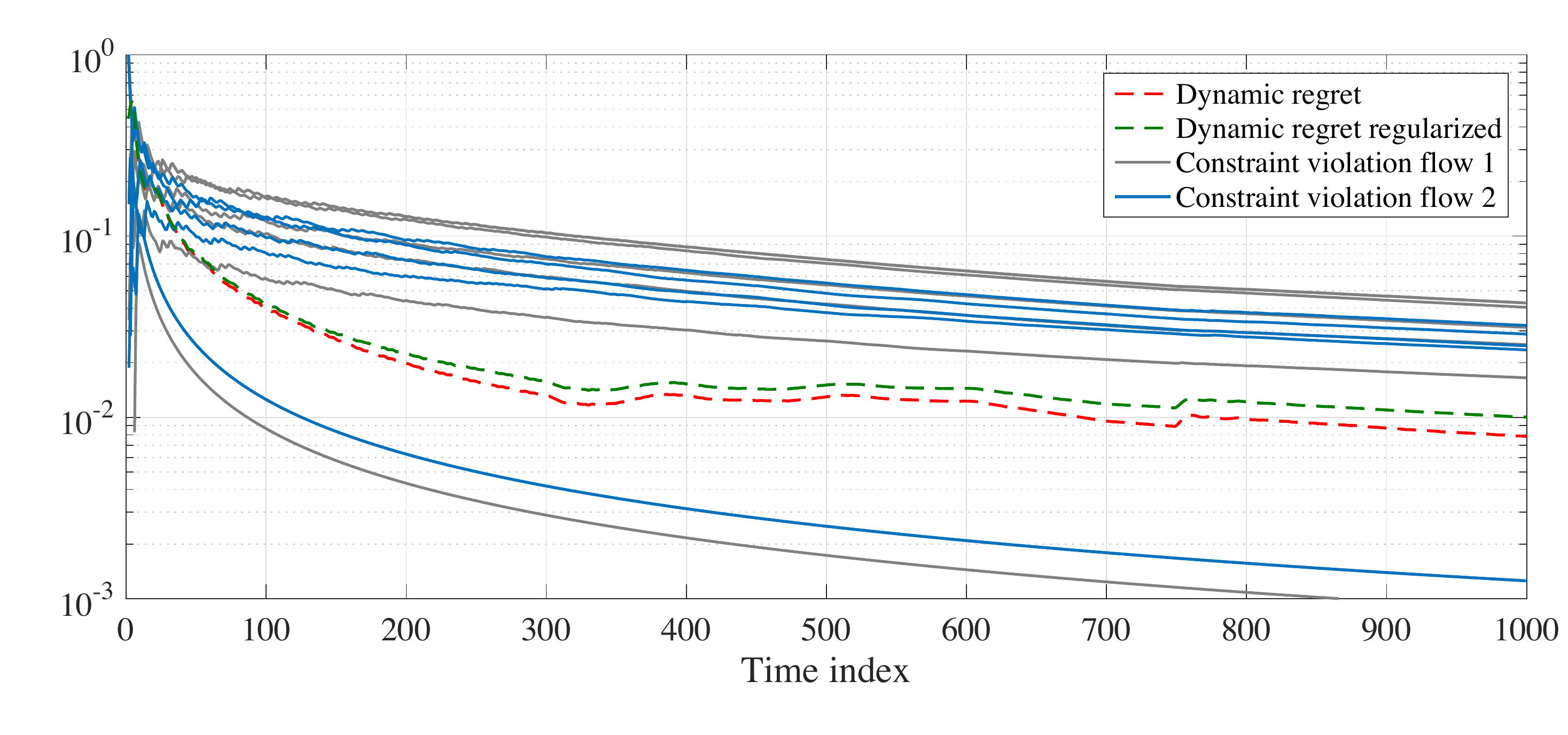}
\vspace{-.6cm}
\caption{Dynamic regret and time-averaged constraint violation.} \label{fig:F_error_comm}
\vspace{-.2cm}
\end{figure}

Fig.~\ref{fig:F_traffic_comm} illustrates the trajectories of the optimal variables $\{x_{i,s}^{(*,k)}\}$ for the traffic rates, obtained by solving problem~\eqref{eqn:sampledProblemComm} to convergence at each time step $k$ (black trajectories); these trajectories are based on the variability of cost, communication channel gain, and exogenous traffic, and they are taken as a benchmark. As an effect of the time-varying costs and problem inputs, the optimal trajectories feature a mix of small disturbances, continuously time-varying portions, and step changes.  Fig.~\ref{fig:F_traffic_comm} also illustrates the trajectories of the traffic rates produced by the online algorithm. It can be seen that the variables $\{x_{i,s}^{(k)}\}$ closely follow the optimal points. A similar trend is observed in Fig.~\ref{fig:F_powers_comm}, where the trajectories of the transmit powers are illustrated. 

The normalized tracking error $\frac{\|\bx^{(*,k)}-\bx^{(k)}\|_2}{\|\bx^{(*,k)}\|_2}$ is illustrated in Fig.~\ref{fig:F_error_tracking}, along with the normalized error for link flows $\frac{\|\br^{(*,k)}-\br^{(k)}\|_2}{\|\br^{(*,k)}\|_2}$ and powers $\frac{\|\bp^{(*,k)}-\bp^{(k)}\|_2}{\|\bp^{(*,k)}\|_2}$. Similar trends can be observed, with a momentary increase of the error around iterations $500$ and $750$ where the optimal solution follow a sinusoidal trajectory and experience a step change. The error momentarily increases when $\sigma^{(k)}$ is larger, thus corroborating the analytical findings.

Lastly, Fig.~\ref{fig:F_error_comm} illustrates the time-averaged constraint violation for the flow conservation constraints and the dynamic regret. In particular, the dynamic regret is plotted for both Case 1 and Case 2. it can be seen that in Case 2 (i.e., regularized Lagrangian) the dynamic regret is slightly higher; this is because the algorithm tracks approximate KKT points.

\subsection{Real-Time OPF} 
\label{sec:results_power_flow}

An illustrative numerical result for the AC OPF problem is provided here based on the test case  described in~\cite{opfPursuit}; in particular,~\cite{opfPursuit} considered the IEEE 37-node test feeder, the distribution system was populated with photovoltaic (PV) systems and energy storage systems, and real datasets for non-controllable loads and solar irradiance were utilized with a granularity of $1$ second; see~\cite{opfPursuit} for a detailed description of the dataset and the simulation setup. 

\begin{figure}[t!]
  \centering
  \includegraphics[width=.95\columnwidth]{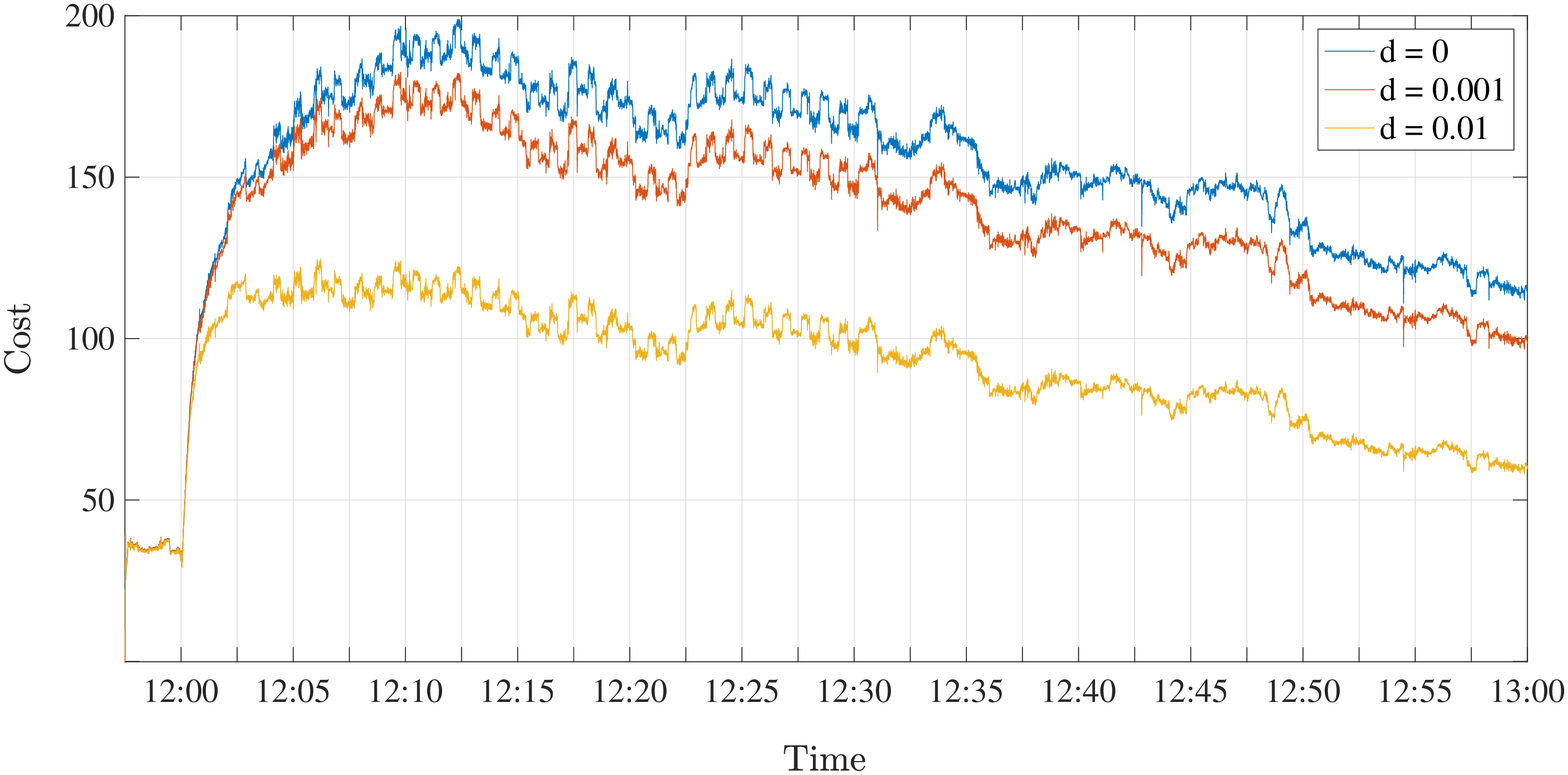}
\vspace{-.25cm}
\caption{Achieved cost of the real-time OPF for different values of the parameter $d$.} \label{fig:F_cost_norm}
\vspace{-.2cm}
\end{figure}
\begin{figure}[t!]
  \centering
  \includegraphics[width=.95\columnwidth]{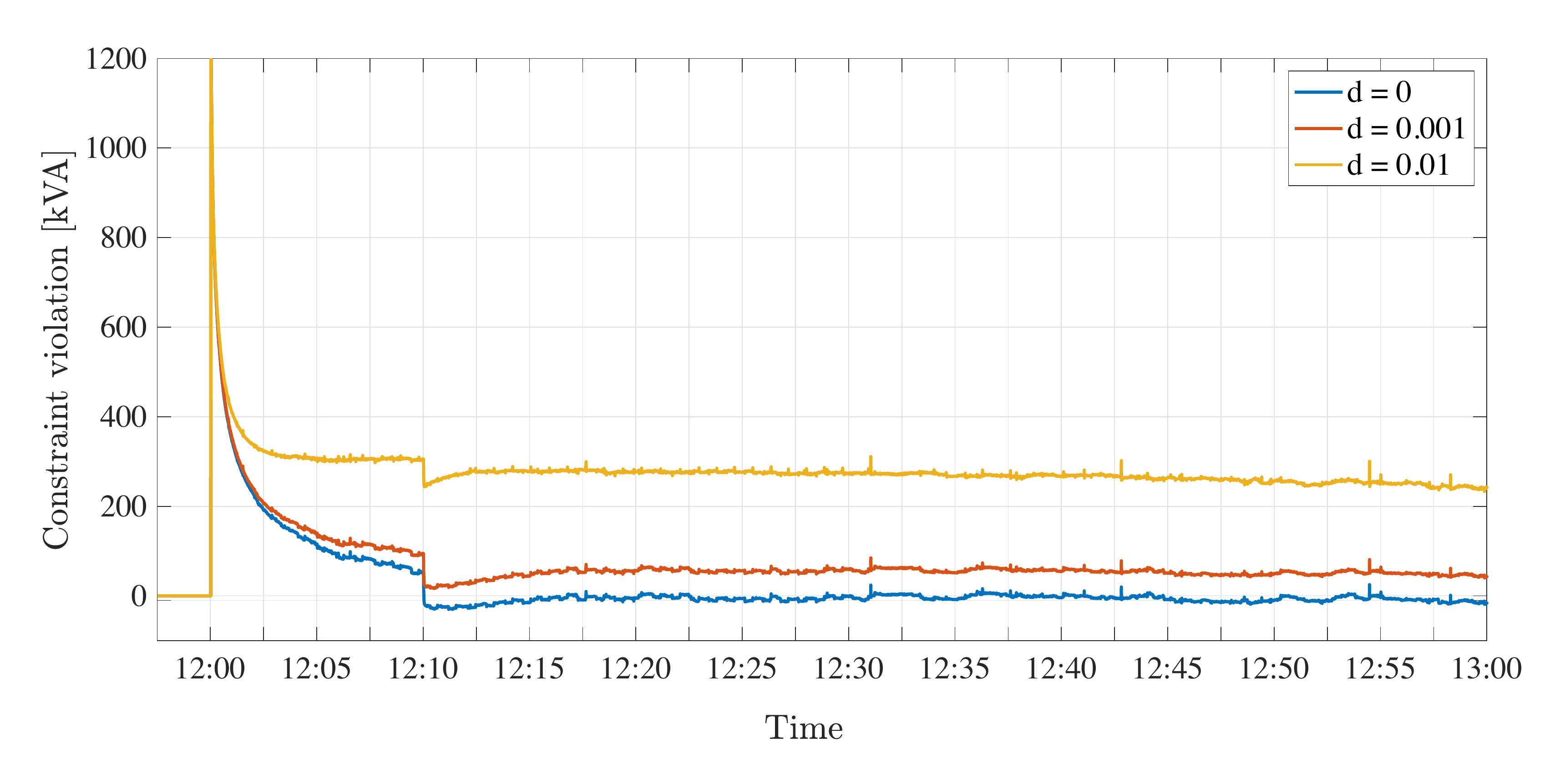}
\vspace{-.25cm}
\caption{Constraint violation for $\|\by_2^{(k)}(\bx) - \by_{\textrm{ref}}^{(k)}\|_2^2 \leq \epsilon$ of the real-time OPF for different values of the parameter $d$.} \label{fig:F_violation_norm}
\vspace{-.2cm}
\end{figure}

We consider a map $\by_2^{(k)}(\bx) = \bC_2 \bx + \bD_2 \bw^{(k)}$ representing a linear approximation for the active and reactive powers at the feeder head~\cite{multiphaseArxiv} and we impose a constraint~\eqref{eq:setpoint}
where $\by_{\textrm{ref}}^{(k)} = [-2500 \textrm{~kW}, -1250 \textrm{~kVAr}]^\sfT$ from 12:00pm to 12:10pm, and then $[-2400 \textrm{~kW}, -1200 \textrm{~kVAr}]^\sfT$ from 12:10pm to 13:00pm, and $\epsilon = 20$ [kVA$^2$]. The constraint~\eqref{eq:setpoint} is not imposed before 12:00pm and after 13:00pm. 

The cost  in the OPF and the voltage constraints are set as in~\cite{opfPursuit}. Figures~\ref{fig:F_cost_norm} and~\ref{fig:F_violation_norm} illustrate the cost achieved by the real-time OPF algorithm and the constraint violation for $\|\by_2^{(k)}(\bx) - \by_{\textrm{ref}}^{(k)}\|_2^2 \leq \epsilon$, respectively, for different values of the regularization coefficient $d$. Since the constraint is not imposed before 12:00pm, a transient is experienced during the first couple of minutes. As described in the previous sections, it can be seen that positive values of $d$ lead to tracking of approximate KKT points and, therefore, to a systematic constraint violation. Since the constraint is violated, a smaller cost can be achieved. Notice that, even though we consider a linearized map, the nonlinearity of the AC power flow equations is implicitly taken into account by   the algorithm through the feedback; therefore, for $d = 0$, the algorithm still guarantees satisfaction of the constraint.

\section{Conclusion}
\label{sec:Conclusions}

This paper leveraged a time-varying convex optimization formalism to model optimal operational trajectories of systems or network of systems, and developed feedback-based online algorithms based on primal-dual projected-gradient methods. In the proposed algorithms, the gradient steps were  modified to accommodate  measurements from the network system. When the design of the algorithm is based on the time-varying Lagrangian, the paper characterized the performance of the proposed via a dynamic regret analysis. When a regularized Lagrangian is utilized, results in terms of Q-linear convergence are provided, at the cost of tracking an approximate KKT trajectory.      

Extending the proposed methodology to time-varying nonconvex problems is the subject of current research efforts. Future efforts will also look at characterizing the performance of the propose method when implemented in a distributed and asynchronous fashion.

\appendix

\subsection{Proofs of Theorem~\ref{thm:regret} and Theorem~\ref{thm:violation}}
\label{sec:proof_regret}

To prove Theorem~\ref{thm:regret} and Theorem~\ref{thm:violation},  the following intermediate  results are first shown.

\begin{lemma} \label{lem:Lx}
For any $\blambda \in \Lambda_{\alpha, \kappa}$, the following holds: 
\begin{align}
&\left \|\nabla_{\bx} \cL^{(k)}\left (\bx^{(k)}, \blambda\right) - \hat{\nabla}_{\bx} \cL^{(k)}\left (\bx^{(k)}, \blambda\right) \right \|_2 \leq  L_x e_y.
\end{align}
Moreover, $\|\nabla_{\bx} \cL^{(k)}\left (\bx, \blambda\right)\|$ is uniformly bounded by $F_x$.
\end{lemma}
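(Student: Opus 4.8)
The plan is to expand both gradients using the explicit formula~\eqref{eq:gradXApprox} and observe that the only difference between $\nabla_{\bx} \cL^{(k)}(\bx^{(k)},\blambda)$ and $\hat{\nabla}_{\bx}\cL^{(k)}(\bx^{(k)},\blambda)$ is that the former evaluates the inner maps at the true output $\by^{(k)}(\bx^{(k)}) = \bC\bx^{(k)} + \bD\bw^{(k)}$ while the latter evaluates them at the measurement $\hat{\by}^{(k)}$. Concretely, the difference is
\[
\bC^\sfT\left(\nabla_\by f_0^{(k)}(\by^{(k)}(\bx^{(k)})) - \nabla_\by f_0^{(k)}(\hat{\by}^{(k)})\right)
+ \bC^\sfT\left(\bJ^{(k)}(\by^{(k)}(\bx^{(k)})) - \bJ^{(k)}(\hat{\by}^{(k)})\right)^\sfT\blambda .
\]
First I would bound the first term by $\|\bC\|_2 L_0 \, \|\by^{(k)}(\bx^{(k)}) - \hat{\by}^{(k)}\|_2$ using Lipschitz continuity of $\nabla_\by f_0^{(k)}$ (Assumption~\ref{ass:cost}), and then by $\|\bC\|_2 L_0 \, e_y$ via Assumption~\ref{ass:error}. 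For the second term, I would use the triangle/submultiplicativity estimate $\| (\bJ_1 - \bJ_2)^\sfT \blambda\|_2 \le \|\bJ_1 - \bJ_2\|_2 \,\|\blambda\|_2$, then Lipschitz continuity of the Jacobian $\bJ^{(k)}$ with constant $L_G$ (Assumption~\ref{ass:nonlinearconstr}), giving $\|\bC\|_2 L_G \|\blambda\|_2 \, e_y$; since $\blambda \in \Lambda_{\alpha,\kappa}$ we have $\|\blambda\|_2 \le \alpha^{-\kappa}$, so this is at most $\|\bC\|_2 L_G \alpha^{-\kappa} e_y$. Adding the two contributions and recalling the definition $L_x := \|\bC\|_2(L_0 + L_G\alpha^{-\kappa})$ yields the first claim.

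For the second claim, I would bound $\|\nabla_{\bx}\cL^{(k)}(\bx,\blambda)\|_2$ directly from its definition $\nabla f^{(k)}(\bx) + \bC^\sfT\nabla f_0^{(k)}(\by^{(k)}(\bx)) + (\bJ^{(k)}(\by^{(k)}(\bx))\bC)^\sfT\blambda$, evaluated now at the true output. The first two terms together are exactly $\nabla h^{(k)}(\bx)$, which is bounded by $F$ over $\cX^{(k)}$ by the definition of $F$. The last term is bounded by $\|\bJ^{(k)}\|_2\,\|\bC\|_2\,\|\blambda\|_2 \le M_g \|\bC\|_2 \alpha^{-\kappa} = G\alpha^{-\kappa}$, using $\|\bJ^{(k)}\|_2 \le M_g$ (established in the text after Assumption~\ref{ass:error}), the definition $G := \|\bC\|_2 M_g$, and $\|\blambda\|_2 \le \alpha^{-\kappa}$. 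Summing gives the bound $F + G\alpha^{-\kappa} = F_x$.

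I do not anticipate a genuine obstacle here: both parts are immediate consequences of the Lipschitz assumptions, the compact-set bounds, and the explicit cap $\alpha^{-\kappa}$ on the dual variables imposed by $\Lambda_{\alpha,\kappa}$. The only point requiring mild care is keeping track of which quantities are evaluated at $\by^{(k)}(\bx^{(k)})$ versus $\hat{\by}^{(k)}$, and ensuring that the $\blambda$-dependent Jacobian term is handled with the operator-norm-times-vector-norm inequality rather than anything sharper.
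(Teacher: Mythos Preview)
Your proof is correct and follows essentially the same approach as the paper: split the difference into the $\nabla f_0^{(k)}$ term and the Jacobian term, apply the Lipschitz constants $L_0$ and $L_G$ from Assumptions~\ref{ass:cost} and~\ref{ass:nonlinearconstr}, use $\|\blambda\|_2 \le \alpha^{-\kappa}$, and invoke Assumption~\ref{ass:error}. You additionally supply the argument for the uniform bound $F_x$ (which the paper leaves implicit), and your identification of the first two terms as $\nabla h^{(k)}(\bx)$ together with the $M_g\|\bC\|_2\alpha^{-\kappa}=G\alpha^{-\kappa}$ bound on the Jacobian term is exactly right.
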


\begin{proof}
Note that
\begin{align}
  \nabla_x \cL^{(k)} (\bx, \blambda) &:= \nabla f^{(k)} (\bx) + \bC^\sfT \nabla f_0^{(k)} (\by^{(k)}(\bx)) \nonumber \\
 & \qquad + \left( \bJ^{(k)} (\by^{(k)} (\bx)) \bC \right)^\sfT \blambda
 \end{align}
By comparing with \eqref{eq:gradXApprox}, we have that
\begin{align*}
&\left \|\nabla_{\bx} \cL^{(k)}\left (\bx^{(k)}, \blambda\right) - \hat{\nabla}_{\bx} \cL^{(k)}\left (\bx^{(k)}, \blambda\right) \right \|_2 \\
& \quad \leq \|\bC\|_2 \|\nabla f_0^{(k)} (\by^{(k)} (\bx^{(k)})) - \nabla f_0^{(k)}(\hat{\by}^{(k)}) \|_2 \\
& \qquad + \|\blambda\|_2 \|\bC\|_2 \|\bJ^{(k)} (\by^{(k)}(\bx^{(k)})) - \bJ^{(k)} (\hat{\by}^{(k)})  \| \\
& \quad \leq \|\bC\|_2 L_0  \|\by^{(k)} (\bx^{(k)}) - \hat{\by}^{(k)} \|_2 \\
& \qquad + \|\blambda\|_2 \|\bC\|_2 L_G \|\by^{(k)} (\bx^{(k)}) - \hat{\by}^{(k)} \|_2 \\
& \quad \leq \left(\|\bC\|_2 L_0 + \frac{1}{\alpha^\kappa} \|\bC\|_2  L_G\right) e_y,
\end{align*}
where the first inequality holds by the triangle and Cauchy-Schwarz inequalities; the second inequality follows by Assumptions \ref{ass:cost} and \ref{ass:nonlinearconstr}; and the last inequality is due to the fact that $\blambda \in \Lambda_{\alpha, \kappa}$ and Assumption \ref{ass:error}.
\end{proof}

\begin{lemma}\label{lem:Llam}
For any $\blambda$, it  holds that: 
\[
\left \| \bg^{(k)}(\hat{\by}^{(k)} )  - \nabla_{\lambda} \cL^{(k)}\left (\bx^{(k)}, \blambda\right) \right \|_2 \leq M_g e_y.
\]
Furthermore, $\|\nabla_{\lambda} \cL^{(k)}\left (\bx, \blambda\right)\|$ is uniformly bounded by $g$.
\end{lemma}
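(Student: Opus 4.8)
\textbf{Proof plan for Lemma~\ref{lem:Llam}.}
The plan is to proceed exactly as in the proof of Lemma~\ref{lem:Lx}, but now the situation is simpler because the quantity in question is linear in $\bg^{(k)}$ rather than involving the Jacobian. First I would write down the true dual gradient of the Lagrangian: differentiating~\eqref{eqn:lagrangian} with respect to $\blambda$ gives $\nabla_\blambda \cL^{(k)}(\bx^{(k)}, \blambda) = \bg^{(k)}(\by^{(k)}(\bx^{(k)}))$, which notably does not depend on $\blambda$ at all. Then the term to be bounded is simply
\[
\left\| \bg^{(k)}(\hat{\by}^{(k)}) - \bg^{(k)}(\by^{(k)}(\bx^{(k)})) \right\|_2,
\]
and I would invoke the Lipschitz-type bound~\eqref{eq:lip_g} established just after Assumption~\ref{ass:error} (which followed from the Mean Value Theorem and the uniform bound $M_g$ on the Jacobian norm), together with the measurement-error bound of Assumption~\ref{ass:error}, to conclude that this is at most $M_g \|\hat{\by}^{(k)} - \by^{(k)}(\bx^{(k)})\|_2 \leq M_g e_y$. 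That settles the first claim.

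For the uniform boundedness claim, since $\nabla_\blambda \cL^{(k)}(\bx, \blambda) = \bg^{(k)}(\by^{(k)}(\bx))$, the norm $\|\nabla_\blambda \cL^{(k)}(\bx, \blambda)\|_2$ equals $\|\bg^{(k)}(\by^{(k)}(\bx))\|_2$, which is bounded by $g$ by the very definition of $g$ given in the notation block preceding Theorem~\ref{thm:regret} (namely $g := \sup_{k \geq 1} \sup_{\bx \in \cX^{(k)}} \|\bg^{(k)}(\by^{(k)}(\bx))\|_2$); here one uses Assumption~\ref{ass:sets} so that the supremum over the compact set $\cX^{(k)}$ is finite, and the continuity of $\bg^{(k)} \circ \by^{(k)}$. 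No obstacle is anticipated: unlike Lemma~\ref{lem:Lx}, there is no $\blambda$-dependent term that forces one to restrict to $\Lambda_{\alpha,\kappa}$, which is precisely why the statement here holds "for any $\blambda$" and why the bound is the clean constant $M_g e_y$ rather than something scaling with $1/\alpha^\kappa$. The only point requiring a word of care is making explicit that $\nabla_\blambda \cL^{(k)}$ is independent of $\blambda$, so that evaluating it at $\blambda$ and comparing with $\bg^{(k)}(\hat{\by}^{(k)})$ is legitimate.

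\begin{proof}
The dual gradient of the Lagrangian~\eqref{eqn:lagrangian} is $\nabla_{\blambda} \cL^{(k)}(\bx, \blambda) = \bg^{(k)}(\by^{(k)}(\bx))$, which is independent of $\blambda$. Hence, using~\eqref{eq:lip_g} and Assumption~\ref{ass:error},
\begin{align*}
\left\| \bg^{(k)}(\hat{\by}^{(k)}) - \nabla_{\blambda} \cL^{(k)}(\bx^{(k)}, \blambda) \right\|_2
&= \left\| \bg^{(k)}(\hat{\by}^{(k)}) - \bg^{(k)}(\by^{(k)}(\bx^{(k)})) \right\|_2 \\
&\leq M_g \left\| \hat{\by}^{(k)} - \by^{(k)}(\bx^{(k)}) \right\|_2 \leq M_g e_y.
\end{align*}
Moreover, $\|\nabla_{\blambda} \cL^{(k)}(\bx, \blambda)\|_2 = \|\bg^{(k)}(\by^{(k)}(\bx))\|_2 \leq g$ for all $\bx \in \cX^{(k)}$ and all $k$, by the definition of $g$.
\end{proof}
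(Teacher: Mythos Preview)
Your proof is correct and follows exactly the approach the paper takes: the paper's proof simply states that the result follows from \eqref{eq:lip_g} and Assumption~\ref{ass:error}, which is precisely what you spell out, with the additional (and welcome) explicit identification $\nabla_\blambda \cL^{(k)}(\bx,\blambda)=\bg^{(k)}(\by^{(k)}(\bx))$ and the observation that the bound by $g$ is immediate from its definition.
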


\begin{proof}
The result follows from \eqref{eq:lip_g} and Assumption \ref{ass:error}.
\end{proof}

\begin{lemma} \label{lem:inst_bound}
For every $k$, the following inequality holds:
  \begin{align*}
  &\cL^{(k)}\left (\bx^{(k)}, \blambda^{(k)}\right)  - \cL^{(k)}\left (\bx^{(*, k)}, \blambda^{(k)}\right)  \\
  &\leq [ \|\bx^{(k)} - \bx^{(*, k)}\|^2 - \|\bx^{(k+1)} - \bx^{(*, k+1)}\|^2 ]/(2 \alpha) + \alpha F_x^2/2 \\
  &\quad +  \left[(2B +  \alpha F_x) +  \frac{\alpha}{2} L_x e_y\right]  L_x e_y + \frac{D + B}{\alpha} \|\bx^{*, k} -  \bx^{(*, k+1)}\| .
  \end{align*}
Furthermore, for any $\blambda \in \Lambda_{\alpha, \kappa}$, it holds that:
  \begin{align*}
  &\cL^{(k)}\left (\bx^{(k)}, \blambda^{(k)}\right)  - \cL^{(k)}\left (\bx^{(k)}, \blambda\right)  \\
  &\geq [ \|\blambda^{(k+1)} - \blambda\|^2 - \|\blambda^{(k)} - \blambda\|^2 ]/(2 \alpha) - \alpha g^2/2 \\
  &\quad -  \left[\frac{2}{\alpha^\kappa} + \alpha \left(g + \frac{M_g e_y}{2}\right) \right]M_g e_y \, .
  \end{align*}
 
  \end{lemma}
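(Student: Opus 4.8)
The plan is to derive both inequalities from the standard nonexpansiveness property of the Euclidean projection, applied to the primal step~\eqref{eq:primalstep_regret} and the dual step~\eqref{eq:dualstep}, and then to convert the ``approximate gradient'' quantities back to the true gradients of $\cL^{(k)}$ using Lemmas~\ref{lem:Lx} and~\ref{lem:Llam}. For the first inequality, I would start from $\bx^{(k+1)} = \proj_{\cX^{(k)}}\{\bx^{(k)} - \alpha \hat\nabla_\bx \cL^{(k)}(\bx^{(k)},\blambda^{(k)})\}$ and use the fact that, since $\bx^{(*,k)} \in \cX^{(k)}$, the projection is firmly nonexpansive; expanding $\|\bx^{(k+1)} - \bx^{(*,k)}\|^2$ yields
\[
\|\bx^{(k+1)} - \bx^{(*,k)}\|^2 \leq \|\bx^{(k)} - \bx^{(*,k)}\|^2 - 2\alpha \hat\nabla_\bx \cL^{(k)}(\bx^{(k)},\blambda^{(k)})^\sfT(\bx^{(k)} - \bx^{(*,k)}) + \alpha^2 \|\hat\nabla_\bx \cL^{(k)}(\bx^{(k)},\blambda^{(k)})\|_2^2 .
\]
Rearranging isolates $\hat\nabla_\bx \cL^{(k)}(\bx^{(k)},\blambda^{(k)})^\sfT(\bx^{(k)} - \bx^{(*,k)})$, and convexity of $\bx \mapsto \cL^{(k)}(\bx,\blambda^{(k)})$ (Assumption~\ref{ass:cost}) gives $\cL^{(k)}(\bx^{(k)},\blambda^{(k)}) - \cL^{(k)}(\bx^{(*,k)},\blambda^{(k)}) \leq \nabla_\bx\cL^{(k)}(\bx^{(k)},\blambda^{(k)})^\sfT(\bx^{(k)} - \bx^{(*,k)})$. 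Writing $\nabla_\bx\cL^{(k)} = \hat\nabla_\bx\cL^{(k)} + (\nabla_\bx\cL^{(k)} - \hat\nabla_\bx\cL^{(k)})$ and bounding the error cross-term by $\|\nabla_\bx\cL^{(k)} - \hat\nabla_\bx\cL^{(k)}\|_2 \cdot \|\bx^{(k)} - \bx^{(*,k)}\|_2 \leq L_x e_y \cdot 2B$ via Lemma~\ref{lem:Lx} and Assumption~\ref{ass:sets}, and bounding $\|\hat\nabla_\bx\cL^{(k)}\|_2 \leq F_x + L_x e_y$ (triangle inequality, Lemma~\ref{lem:Lx}) produces the terms $\alpha F_x^2/2$, $(2B+\alpha F_x)L_x e_y$, and $\tfrac{\alpha}{2}L_x^2 e_y^2$ after expanding $\|\hat\nabla_\bx\cL^{(k)}\|_2^2 \le (F_x + L_x e_y)^2$ and discarding the lower-order piece consistently with the stated bound.

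The remaining subtlety in the first inequality is that the telescoping term involves $\|\bx^{(k+1)} - \bx^{(*,k+1)}\|^2$, not $\|\bx^{(k+1)} - \bx^{(*,k)}\|^2$. I would bridge this gap with the identity $\|\bx^{(k+1)} - \bx^{(*,k)}\|^2 = \|\bx^{(k+1)} - \bx^{(*,k+1)}\|^2 + 2(\bx^{(k+1)} - \bx^{(*,k+1)})^\sfT(\bx^{(*,k+1)} - \bx^{(*,k)}) + \|\bx^{(*,k+1)} - \bx^{(*,k)}\|^2$, bounding the cross term by Cauchy--Schwarz using $\|\bx^{(k+1)} - \bx^{(*,k+1)}\|_2 \leq D$ (or $D+B$, via Assumption~\ref{ass:sets} on the diameter of $\cY^{(k)}$ together with the bound $B$ on $\cX^{(k)}$) and $\|\bx^{(*,k+1)} - \bx^{(*,k)}\|_2 = \sigma^{(k)}$, after dividing through by $2\alpha$; this yields the $\frac{D+B}{\alpha}\|\bx^{(*,k)} - \bx^{(*,k+1)}\|$ term (absorbing the small $\sigma^2$ term).

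For the second inequality, I would run the symmetric argument on the dual step $\blambda^{(k+1)} = \proj_{\cD^{(k)}}\{\blambda^{(k)} + \alpha \bg^{(k)}(\hat\by^{(k)})\}$ (note: with $p=d=0$ here). Nonexpansiveness of the projection onto $\Lambda_{\alpha,\kappa} \ni \blambda$ gives $\|\blambda^{(k+1)} - \blambda\|^2 \leq \|\blambda^{(k)} - \blambda\|^2 + 2\alpha \bg^{(k)}(\hat\by^{(k)})^\sfT(\blambda^{(k)} - \blambda) + \alpha^2\|\bg^{(k)}(\hat\by^{(k)})\|_2^2$. Since $\cL^{(k)}(\bx^{(k)},\cdot)$ is linear (indeed affine) in $\blambda$ with gradient $\nabla_\blambda\cL^{(k)}(\bx^{(k)},\blambda) = \bg^{(k)}(\by^{(k)}(\bx^{(k)}))$, we get exactly $\cL^{(k)}(\bx^{(k)},\blambda^{(k)}) - \cL^{(k)}(\bx^{(k)},\blambda) = \bg^{(k)}(\by^{(k)}(\bx^{(k)}))^\sfT(\blambda^{(k)} - \blambda)$; replacing $\bg^{(k)}(\by^{(k)}(\bx^{(k)}))$ by $\bg^{(k)}(\hat\by^{(k)})$ costs an error term bounded by $M_g e_y$ (Lemma~\ref{lem:Llam}) times $\|\blambda^{(k)} - \blambda\|_2 \leq 2/\alpha^\kappa$ (diameter of $\Lambda_{\alpha,\kappa}$), while $\|\bg^{(k)}(\hat\by^{(k)})\|_2 \leq g + M_g e_y$ handles the quadratic term as $\tfrac{\alpha}{2}(g + M_g e_y)^2$. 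Collecting these with the appropriate sign (the inequality is reversed because we are lower-bounding a concave-in-$\blambda$ expression) gives the claimed bound with the $\frac{2}{\alpha^\kappa}M_g e_y$, $\alpha g M_g e_y$, and $\tfrac{\alpha}{2}M_g^2 e_y^2$ contributions. The main obstacle I anticipate is bookkeeping the error terms so they aggregate exactly into the stated coefficients $(2B + \alpha F_x) + \tfrac{\alpha}{2}L_x e_y$ and $\frac{2}{\alpha^\kappa} + \alpha(g + \tfrac{M_g e_y}{2})$ — in particular, correctly deciding which cross-terms to expand via $(a+b)^2$ versus bound by $2ab$, and confirming that the $D+B$ (rather than $D$) appears because the comparator-shift term must be controlled using a bound valid for iterates in $\cX^{(k)}$ and optimizers in $\cX^{(k+1)}$ simultaneously.
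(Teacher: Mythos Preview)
Your proposal is correct and follows essentially the same route as the paper: nonexpansiveness of the projection, add--subtract the true gradient to isolate the measurement error (bounded via Lemmas~\ref{lem:Lx} and~\ref{lem:Llam}), convexity/linearity of $\cL^{(k)}$ in the primal/dual variable, and a polarization identity to pass from $\bx^{(*,k)}$ to $\bx^{(*,k+1)}$. The only cosmetic difference is the order of operations in two places. First, the paper groups the error as $\bgamma^{(k)} := \alpha(\nabla_\bx\cL^{(k)} - \hat\nabla_\bx\cL^{(k)})$ and expands $\|u+\bgamma^{(k)}\|^2 = \|u\|^2 + (2u+\bgamma^{(k)})^\sfT\bgamma^{(k)}$ with $u = \bx^{(k)}-\bx^{(*,k)}-\alpha\nabla_\bx\cL^{(k)}$, which yields the coefficient $(2B+\alpha F_x)+\tfrac{\alpha}{2}L_x e_y$ in one shot; your decomposition (split the inner product, bound $\|\hat\nabla\|^2\le(F_x+L_xe_y)^2$) produces the identical terms after collecting. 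Second, for the comparator shift the paper starts from $\|\bx^{(k+1)}-\bx^{(*,k+1)}\|^2$ and extracts $\|\bx^{(k+1)}-\bx^{(*,k)}\|^2$ so that the cross term is $[2(\bx^{(k+1)}-\bx^{(*,k)})+(\bx^{(*,k)}-\bx^{(*,k+1)})]^\sfT(\bx^{(*,k)}-\bx^{(*,k+1)})$; since $\bx^{(k+1)},\bx^{(*,k)}\in\cX^{(k)}$ this gives $2\diam(\cX^{(k)})+\|\bx^{(*,k)}\|+\|\bx^{(*,k+1)}\|\le 2(D+B)$ directly, with no leftover $\sigma^2$ to absorb --- so the ``obstacle'' you flagged is resolved simply by running the polarization identity in the other direction.
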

  
  \begin{proof}
  We have that
  \begin{align}
    &\|\bx^{(k+1)} - \bx^{(*, k+1)}\|^2\nonumber \\
    &= \|\bx^{(k+1)} - \bx^{(*, k)} +  \bx^{(*, k)}-  \bx^{(*, k+1)}\|^2 \nonumber \\
    &= \|\bx^{(k+1)} - \bx^{(*, k)}\|^2 + 2 (\bx^{(k+1)} - \bx^{(*, k)})^\sfT (\bx^{(*, k)}-  \bx^{(*, k+1)}) \nonumber \\
    &\quad + \|\bx^{(*, k)}-  \bx^{(*, k+1)}\|^2 \nonumber \\
    &\leq \left \|\bx^{(k)} - \bx^{(*, k)} - \alpha \hat{\nabla}_{\bx} \cL^{(k)}\left (\bx^{(k)}, \blambda^{(k)}\right) \right \|^2 \nonumber \\
    &\quad + [2 (\bx^{(k+1)} - \bx^{(*, k)}) + (\bx^{(*, k)}-  \bx^{(*, k+1)})]^\sfT (\bx^{(*, k)}-  \bx^{(*, k+1)}) \nonumber\\
 &\leq \left\|\bx^{(k)} - \bx^{(*, k)} - \alpha \hat{\nabla}_{\bx} \cL^{(k)}\left (\bx^{(k)}, \blambda^{(k)}\right) \right\|^2  \nonumber\\
 &\quad + 2(D + B) \|\bx^{(*, k)}-  \bx^{(*, k+1)}\| \label{eqn:proof_step1}, 
  \end{align}
  where the first inequality follows by \eqref{eq:primalstep_regret} and the non-expansiveness property of the projection operator; and in the last inequality, we used the Cauchy-Schwarz inequality and the fact that under Assumption \ref{ass:sets}
  \begin{align*}
  & \|2 (\bx^{(k+1)} - \bx^{(*, k)}) + (\bx^{(*, k)}-  \bx^{(*, k+1)})\| \\
  &\quad \leq 2 \|\bx^{(k+1)} - \bx^{(*, k)}\| + \|\bx^{(*, k)}\| + \|\bx^{(*, k+1)}\| \\ 
  &\quad \leq 2 \diam (\cY^{(k)}) + |\cY^{(k)}| + |\cY^{(k+1)}|\\
  &\quad \leq 2 (D + B).
  \end{align*}
  We now expand the first term in \eqref{eqn:proof_step1}. It holds that
\begin{align}
& \left\|\bx^{(k)} - \bx^{(*, k)} - \alpha \hat{\nabla}_{\bx} \cL^{(k)}\left (\bx^{(k)}, \blambda^{(k)}\right) \right\|^2  \nonumber \\
& = \Big \| \bx^{(k)}- \bx^{(*, k)} - \alpha \nabla_{\bx} \cL^{(k)}\left (\bx^{(k)}, \blambda^{(k)}\right)  \nonumber \\
& \quad + \alpha \left (\nabla_{\bx} \cL^{(k)}\left (\bx^{(k)}, \blambda^{(k)}\right) - \hat{\nabla}_{\bx} \cL^{(k)}\left (\bx^{(k)}, \blambda^{(k)}\right) \right) \Big \|^2 \label{eqn:proof_step2} 
\end{align}
Let
\begin{align*}
&\bgamma^{(k)} := \alpha\left (\nabla_{\bx} \cL^{(k)}\left (\bx^{(k)}, \blambda^{(k)}\right) - \hat{\nabla}_{\bx} \cL^{(k)}\left (\bx^{(k)}, \blambda^{(k)}\right) \right)
\end{align*}
and note that using Lemma \ref{lem:Lx}, we have
\begin{equation}
\|\bgamma^{(k)}\| \leq \alpha L_x e_y. \label{eq:gamma_k}
\end{equation}
Continuing the derivation in \eqref{eqn:proof_step2}, we obtain 
\begin{align}
& \left\|\bx^{(k)} - \bx^{(*, k)} - \alpha \hat{\nabla}_{\bx} \cL^{(k)}\left (\bx^{(k)}, \blambda^{(k)}\right) \right\|^2  \nonumber \\
    &= \left\|\bx^{(k)}- \bx^{(*, k)} - \alpha \nabla_{\bx} \cL^{(k)}\left (\bx^{(k)}, \blambda^{(k)}\right) \right\|^2 \nonumber \\
    &\quad + \left[2\left(\bx^{(k)}- \bx^{(*, k)} - \alpha \nabla_{\bx} \cL^{(k)}\left (\bx^{(k)}, \blambda^{(k)}\right)\right) + \bgamma^{(k)}  \right]^\sfT \bgamma^{(k)} \nonumber\\
&\leq \left\|\bx^{(k)}- \bx^{(*, k)} - \alpha \nabla_{\bx} \cL^{(k)}\left (\bx^{(k)}, \blambda^{(k)}\right) \right\|^2 \nonumber\\
&\quad + [2(2B +  \alpha F_x) +  \alpha L_x e_y]  \alpha L_x e_y  \nonumber\\
    &= \left\|\bx^{(k)}- \bx^{(*, k)}\right \|^2 \nonumber\\
&\quad - 2 \alpha \left[\nabla_{\bx} \cL^{(k)}\left (\bx^{(k)}, \blambda^{(k)}\right) \right]^\sfT (\bx^{(k)}- \bx^{(*, k)}) \nonumber \\
    &\quad + \alpha^2 F_x^2 + [2(2B +  \alpha F_x) +  \alpha L_x e_y]  \alpha L_x e_y \nonumber\\
    &\leq \left\|\bx^{(k)}- \bx^{(*, k)}\right \|^2 \nonumber\\
    &\quad - 2 \alpha \left[\cL^{(k)}\left (\bx^{(k)}, \blambda^{(k)}\right)  - \cL^{(k)}\left (\bx^{(*, k)}, \blambda^{(k)}\right) \right]  \nonumber\\
  &\quad + \alpha^2 F_x^2 + [2(2B +  \alpha F_x) +  \alpha L_x e_y]  \alpha L_x e_y,  \label{eqn:proof_step3}
  \end{align}
  where  the first inequality holds by the Cauchy-Schwarz inequality, \eqref{eq:gamma_k}, and  Assumption \ref{ass:sets}; and the last inequality holds by the convexity of $\cL^{(k)}\left (\cdot, \blambda\right)$. The first part of the lemma then follows by combining \eqref{eqn:proof_step1} and \eqref{eqn:proof_step3}, and rearranging.
  
  For the second part, for any $\blambda \in \Lambda_{\alpha, \kappa}$, we have that
  \begin{align}
    &\|\blambda^{(k+1)} - \blambda\|^2\nonumber \\
    &\leq \left \|\blambda^{(k)}  -  \blambda + \alpha \bg^{(k)}(\hat{\by}^{(k)} )   \right \|^2 \nonumber \\
    & =  \Big \|\blambda^{(k)}  -  \blambda + \alpha \nabla_{\lambda} \cL^{(k)}\left (\bx^{(k)}, \blambda^{(k)}\right) \nonumber \\
    & \quad + \alpha \left(\bg^{(k)}(\hat{\by}^{(k)} ) - \nabla_{\lambda} \cL^{(k)}\left (\bx^{(k)}, \blambda^{(k)}\right) \right)  \Big \|^2. \nonumber
    \end{align}
    Letting
    \[
    \bbeta^{(k)} := \alpha \left(\bg^{(k)}(\hat{\by}^{(k)} )  - \nabla_{\lambda} \cL^{(k)}\left (\bx^{(k)}, \blambda^{(k)}\right) \right)
    \]
    and noticing that, by Lemma \ref{lem:Llam}
    \begin{align}
 \|\bbeta^{(k)}\|_2 &\leq  \alpha M_g e_y\label{eqn:proof_step11}, 
  \end{align}
  it follows that
  \begin{align}
  &\|\blambda^{(k+1)} - \blambda\|^2\nonumber \\
  &\leq \left \| \blambda^{(k)}  -  \blambda + \alpha \nabla_{\lambda} \cL^{(k)}\left (\bx^{(k)}, \blambda^{(k)}\right)\right \|^2 \nonumber \\
  &\quad + \left[2\left( \blambda^{(k)}  -  \blambda + \alpha \nabla_{\lambda} \cL^{(k)}\left (\bx^{(k)}, \blambda^{(k)}\right)\right) +\bbeta^{(k)}\right]^\sfT \bbeta^{(k)} \nonumber \\
  &\leq \left \| \blambda^{(k)}  -  \blambda + \alpha \nabla_{\lambda} \cL^{(k)}\left (\bx^{(k)}, \blambda^{(k)}\right)\right \|^2 \nonumber \\
  &\quad + \alpha \left[\frac{4}{\alpha^\kappa} + \alpha (2g + M_g e_y) \right]M_g e_y \nonumber \\
  &\leq \left \| \blambda^{(k)}  -  \blambda\right\|^2  + 2\alpha  \left[\nabla_{\lambda} \cL^{(k)}\left (\bx^{(k)}, \blambda^{(k)}\right) \right]^\sfT (\blambda^{(k)}  -  \blambda) \nonumber \\
  &\quad + \alpha^2 g^2 + \alpha \left[\frac{4}{\alpha^\kappa} + \alpha (2g + M_g e_y) \right]M_g e_y \nonumber \\
  &= \left \| \blambda^{(k)}  -  \blambda\right\|^2  + 2\alpha  \left[ \cL^{(k)}\left (\bx^{(k)}, \blambda^{(k)}\right) -  \cL^{(k)}\left (\bx^{(k)}, \blambda\right) \right] \nonumber \\
  &\quad + \alpha^2 g^2 + \alpha \left[\frac{4}{\alpha^\kappa} + \alpha (2g + M_g e_y) \right]M_g e_y \nonumber
  \end{align}
  where the second inequality holds by \eqref{eqn:proof_step11}; and the equality holds by the linearity of $\cL^{(k)}\left (\bx, \blambda\right)$ in $\blambda$. The second part of the lemma then follow by rearranging the obtained inequality.
  \end{proof}

With these intermediate results in place, the proofs of Theorem~\ref{thm:regret} and Theorem~\ref{thm:violation} are  provided next.  

\begin{proof}[Proof of Theorem \ref{thm:regret}]
By using Lemma \ref{lem:inst_bound}, we have that
\begin{align}
& \cL^{(k)}\left (\bx^{(k)}, \blambda\right)  - \cL^{(k)}\left (\bx^{(*, k)}, \blambda^{(k)}\right) \nonumber\\
  &\leq [ \|\bx^{(k)} - \bx^{(*, k)}\|^2 - \|\bx^{(k+1)} - \bx^{(*, k+1)}\|^2 ]/(2 \alpha) + \alpha F_x^2/2 \nonumber\\
  &\quad +  \left[(2B +  \alpha F_x) +  \frac{\alpha}{2} L_x e_y\right]  L_x e_y \nonumber \\
  & \quad +  (D + B) \|\bx^{*, k} -  \bx^{*, k+1}\|/\alpha \nonumber \\
    &\quad  + [ \|\blambda^{(k)} - \blambda\|^2 - \|\blambda^{(k+1)} - \blambda\|^2 ]/(2 \alpha) + \alpha g^2/2 \nonumber \\
  &\quad + \left[\frac{2}{\alpha^\kappa} + \alpha \left(g + \frac{M_g e_y}{2}\right) \right]M_g e_y\label{eqn:bound_reg_inst}
\end{align}	
for any $\blambda \in \Lambda_{\alpha, \kappa}$. To show \eqref{eqn:regret}, we use $\blambda = 0$ and the fact that $[\blambda^{(k)}]^\sfT \bg^{(k)}(\by^{(k)}(\bx^{(*, k)})) \leq 0$ by the feasibility of $\bx^{(*, k)}$ for $\textrm{(P0)}^{(k)}$. Therefore, by \eqref{eqn:lagrangian}, we have that
\begin{align}
&\cL^{(k)}\left (\bx^{(k)}, 0\right)  - \cL^{(k)}\left (\bx^{(*, k)}, \blambda^{(k)}\right) \nonumber \\
&= h^{(k)} (\bx^{(k)}) - h^{(k)} (\bx^{(*,k)}) - [\blambda^{(k)}]^\sfT \bg^{(k)}(\by^{(k)}(\bx^{(*, k)})) \nonumber \\
&\geq h^{(k)} (\bx^{(k)}) - h^{(k)} (\bx^{(*,k)}).
\end{align}
By using this last inequality in \eqref{eqn:bound_reg_inst}, and summing  \eqref{eqn:bound_reg_inst} over $\ell = 1, \ldots, k$, we have
\begin{align}
& \frac{1}{k}\sum_{\ell = 1}^k \left(h^{(k)} (\bx^{(k)}) - h^{(k)} (\bx^{(*,k)})\right ) \nonumber\\
& \leq \frac{1}{2 \alpha k} \left(\|\bx^{(1)} - \bx^{(*, 1)}\|^2 + \|\blambda^{(1)} \|^2 \right) + \frac{\alpha}{2} ( F_x^2 +  g^2) \nonumber\\
& + \left[(2B +  \alpha F_x) +  \frac{\alpha}{2} L_x e_y\right]  L_x e_y \nonumber \\
& + \left[\frac{2}{\alpha^\kappa} + \alpha \left(g + \frac{M_g e_y}{2}\right) \right]M_g e_y  \nonumber\\
& + \frac{D + B}{\alpha} \frac{1}{k}\sum_{\ell=1}^k \sigma^{(\ell)}, \label{eqn:regret_proof}
\end{align}
Note that $F_x = F + G/\alpha^\kappa$, and hence
\begin{align*}
\alpha F_x^2 &= \alpha (F^2 + 2 F G/\alpha^\kappa + G^2/\alpha^{2 \kappa}) \\
& = \alpha F^2 + 2 F G \alpha^{1- \kappa} + G^2 \alpha^{1 - 2 \kappa}.
\end{align*}
Using this, the fact that $\|\bx^{(1)} - \bx^{(*, 1)} \| \leq 2B $, and assuming (without loss of generality) that $\blambda^{(1)} = 0$, completes the proof of \eqref{eqn:regret_anytime} and
\eqref{eqn:regret}.
\end{proof}

\begin{proof}[Proof of Theorem \ref{thm:violation}]
To prove \eqref{eqn:constraints_anytime}, for a given $j = 1, \ldots, M$, consider the $j$-th component of $\bg^{(k)}(\by^{(k)}(\bx^{(k)}))$, and let $\blambda_j$ be a vector in $\reals^M_{+}$ with all zero components apart from the $j$-the component which equals $1/\alpha^\kappa$. Note that $\blambda_j \in \Lambda_{\alpha, \kappa}$ by construction, and $\blambda_j^\sfT \bg^{(k)}(\by^{(k)}(\bx^{(k)})) = g^{(k)}_j(\by^{(k)}(\bx^{(k)}))/\alpha^\kappa$. Therefore, 
\begin{align*}
&\cL^{(k)}\left (\bx^{(k)}, \blambda_i\right)  - \cL^{(k)}\left (\bx^{(*, k)}, \blambda^{(k)}\right) \\
&\quad \geq h^{(k)} (\bx^{(k)}) - h^{(k)} (\bx^{(*,k)}) + g_j^{(k)}(\by^{(k)}(\bx^{(k)}))/\alpha^\kappa.
\end{align*}
Now, by convexity of $h$ and Assumption	\ref{ass:sets}, we have that
\begin{align}
h^{(k)} (\bx^{(*,k)}) - h^{(k)} (\bx^{(k)}) &\leq \left(\nabla h^{(k)} (\bx^{(*,k)}) \right)^\sfT \left(\bx^{(*,k)} -  \bx^{(k)} \right) \nonumber \\
& \leq \left \| \nabla h^{(k)} (\bx^{(*,k)}) \right \| \left\| \bx^{(*,k)} -  \bx^{(k)}\right\| \nonumber \\
&\leq 2FB
\end{align}
Thus, letting $B^{(\infty)}(\alpha, \kappa)$ denote the asymptotic bound of \eqref{eqn:regret_proof}, and using similar derivation, completes the proof of the theorem.
\end{proof}

\subsection{Proofs of Theorem~\ref{theorem.inexact} and Theorem~\ref{cor:asym}}
\label{sec:proof_linear}

The following intermediate lemmas are utilized for the proof of~Theorem~\ref{theorem.inexact} and Theorem~\ref{cor:asym}.    

\begin{lemma}
\label{lem:errorPhi}
Assumptions \ref{ass:sets}, \ref{ass:cost}, \ref{ass:nonlinearconstr}, and \ref{ass:error}, the perturbation in the map $\hat{\bphi}^{(k)}$ can be bounded as: 
\begin{align}
\label{eq:errorPhi}
\left\|\bphi^{(k)}(\bz^{(k)}) - \hat{\bphi}^{(k)} (\bz^{(k)}) \right\|_2^2 \leq e_p^2 + e_d^2
\end{align}
where 
\begin{align}
e_p & \leq \left(L_0 + M_\lambda M_I \max_{j = 1, \ldots, M_I}\{L_{g_j}\}\right) \|\bC\|_2 e_y \\
e_d & \leq M_g e_y \, ,
\end{align}
with $M_{\lambda} := \sup_{k \geq 1} \max_{\blambda \in \cD^{(k)}} \|\blambda\|_1$\, . 
\end{lemma}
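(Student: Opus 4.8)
The plan is to bound the primal block and the dual block of $\bphi^{(k)}(\bz^{(k)}) - \hat{\bphi}^{(k)}(\bz^{(k)})$ separately, exploiting the fact that the squared Euclidean norm on $\reals^n \times \reals^M$ splits as the sum of the squared norms of the two blocks. The first observation is that the only difference between $\bphi^{(k)}$ and $\hat{\bphi}^{(k)}$ is that the synthetic output $\by^{(k)}(\bx)$ is replaced by the measurement $\hat{\by}^{(k)}$: the regularization terms $-p\bx$ (inside $\nabla_\bx \cL_{p,d}^{(k)}$, cf.~\eqref{eq:gradLambdaApprox2}) and $+d\blambda$ appear identically in both maps and cancel. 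Hence the dual block reduces to $\bg^{(k)}(\hat{\by}^{(k)}) - \bg^{(k)}(\by^{(k)}(\bx^{(k)}))$ and the primal block to $\nabla_\bx \cL^{(k)}(\bx^{(k)},\blambda^{(k)}) - \hat{\nabla}_\bx \cL^{(k)}(\bx^{(k)},\blambda^{(k)})$.

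For the dual block, the bound $\|\bg^{(k)}(\hat{\by}^{(k)}) - \bg^{(k)}(\by^{(k)}(\bx^{(k)}))\|_2 \leq M_g \|\hat{\by}^{(k)} - \by^{(k)}(\bx^{(k)})\|_2 \leq M_g e_y =: e_d$ follows immediately from the Lipschitz property~\eqref{eq:lip_g} (itself a consequence of the Mean Value Theorem and the uniform bound $M_g$ on $\|\bJ^{(k)}\|_2$) together with Assumption~\ref{ass:error}. For the primal block I would expand both gradients, using $\nabla_\bx \cL^{(k)}(\bx,\blambda) = \nabla f^{(k)}(\bx) + \bC^\sfT \nabla f_0^{(k)}(\by^{(k)}(\bx)) + (\bJ^{(k)}(\by^{(k)}(\bx))\bC)^\sfT \blambda$ and~\eqref{eq:gradXApprox}. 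The common term $\nabla f^{(k)}(\bx)$ cancels, leaving $\bC^\sfT\big(\nabla f_0^{(k)}(\by^{(k)}(\bx)) - \nabla f_0^{(k)}(\hat{\by}^{(k)})\big) + \bC^\sfT\big(\bJ^{(k)}(\by^{(k)}(\bx)) - \bJ^{(k)}(\hat{\by}^{(k)})\big)^\sfT \blambda$. The first summand is bounded by $\|\bC\|_2 L_0 e_y$ via Lipschitz continuity of $\nabla f_0^{(k)}$ (Assumption~\ref{ass:cost}) and Assumption~\ref{ass:error}. For the second summand, I would use the fact that rows $M_I+1,\ldots,M$ of $\bJ^{(k)}$ correspond to affine/linear constraints and are constant in $\by$, so $\big(\bJ^{(k)}(\by_1) - \bJ^{(k)}(\by_2)\big)^\sfT \blambda = \sum_{j=1}^{M_I}\lambda_j\big(\nabla g_j^{(k)}(\by_1) - \nabla g_j^{(k)}(\by_2)\big)$; the triangle inequality, Lipschitz continuity of each $\nabla g_j^{(k)}$ (Assumption~\ref{ass:nonlinearconstr}), the crude bound $\sum_{j=1}^{M_I}\lambda_j L_{g_j} \leq M_I\big(\max_{j}\lambda_j\big)\big(\max_{j} L_{g_j}\big) \leq M_I M_\lambda \max_{j=1,\ldots,M_I} L_{g_j}$, and Assumption~\ref{ass:error} then give $\|\bC\|_2 M_I M_\lambda \max_j L_{g_j}\, e_y$ for the second summand. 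Adding the two contributions produces exactly the stated $e_p$, and combining with the dual bound, $\|\bphi^{(k)}(\bz^{(k)}) - \hat{\bphi}^{(k)}(\bz^{(k)})\|_2^2$ equals the sum of the squared primal and dual block norms, which is at most $e_p^2 + e_d^2$, establishing~\eqref{eq:errorPhi}.

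The computations are routine; the only step requiring care is the Jacobian term, where one must (i) discard the affine rows so that the perturbation is expressed purely through the Lipschitz constants $L_{g_j}$ of the nonlinear constraints, and (ii) pass from the $\blambda$-weighted sum of gradient differences to a bound in terms of $M_\lambda$ and $M_I$ — this is also where the slack in the final constant resides. Everything else is a direct application of the triangle/Cauchy–Schwarz inequalities together with Assumptions~\ref{ass:sets}--\ref{ass:error}.
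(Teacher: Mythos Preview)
Your proposal is correct and follows essentially the same argument as the paper: split $\bphi^{(k)}-\hat{\bphi}^{(k)}$ into its primal and dual blocks, bound the dual block via~\eqref{eq:lip_g} and Assumption~\ref{ass:error}, and bound the primal block by separating the $\nabla f_0^{(k)}$ contribution (Lipschitz constant $L_0$) from the Jacobian contribution, where the affine rows drop out and the remaining $M_I$ terms are controlled via $L_{g_j}$ and $M_\lambda$. The only cosmetic difference is the order in which you pass from $\sum_{j=1}^{M_I}|\lambda_j|L_{g_j}$ to $M_I M_\lambda \max_j L_{g_j}$, but the resulting constant is identical.
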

\begin{proof}
Notice first that the left hand side of~\eqref{eq:errorPhi} can be written as $\|\be_p^{(k)}\|^2 + \|\be_d^{(k)}\|^2$, where  
\begin{align}
\be_p^{(k)} & := \nabla_{\bx}  \cL_{p,d}^{(k)}(\bz^{(k)}) - \hat{\nabla}_{\bx}  \cL_{p,d}^{(k)}(\bz^{(k)}) \label{eq_error_frad_p} \\
\be_d^{(k)} & := \bg^{(k)}(\by^{(k)}(\bx^{(k)})) - \bg^{(k)}(\hat{\by}^{(k)}) \, . \label{eq_error_frad_d}
\end{align}
Regarding~\eqref{eq_error_frad_d}, from~\eqref{eq:lip_g} and Assumption~\ref{ass:error}, it follows that  
\begin{align} 
\|\be_d^{(k)}\|_2 \leq M_g \|\by^{(k)}(\bx^{(k)}) - \hat{\by}^{(k)}\|_2 \leq M_g e_y
\end{align}   
for all $k \in \mathbb{N}$. Regarding~\eqref{eq_error_frad_p}, use the triangle inequality to obtain $\|\be_p^{(k)}\|_2 \leq \|\be_{p,1}^{(k)}\|_2 + \|\be_{p,2}^{(k)}\|_2$, with: 
\begin{align}
& \be_{p,1}^{(k)} := \bC^\sfT \nabla f_0^{(k)} (\by^{(k)}(\bx^{(k)})) - \bC^\sfT \nabla f_0^{(k)} (\hat{\by}^{(k)})  \\
& \be_{p,2}^{(k)} := \sum_{j = 1}^M \lambda^{(k)}_j \bC^\sfT \left(\nabla_\bx g_j(\by^{(k)}(\bx^{(k)})) - \nabla_\bx g_j(\hat{\by}^{(k)}) \right) .
\end{align}
The first term can be bounded as
\begin{subequations}
\begin{align}
\|\be_{p,1}^{(k)}\|_2 & \leq \|\bC\|_2 \| \nabla f_0^{(k)} (\by^{(k)}(\bx^{(k)})) - \nabla f_0^{(k)} (\hat{\by}^{(k)}) \|_2  \\
& \leq \|\bC\|_2 L_0 \|\by^{(k)}(\bx^{(k)}) - \hat{\by}^{(k)}\|_2 \\
& \leq \|\bC\|_2 L_0 e_y \, .
\end{align}
\end{subequations}
The norm of $\be_{p,2}^{(k)}$ can be bounded as follows:
\begin{subequations}
\begin{align}
& \|\be_{p,2}^{(k)}\|_2 \nonumber \\
& \leq 
\sum_{j = 1}^M |\lambda_j^{(k)}| \|\bC^\sfT (\nabla_\bx g_j(\by^{(k)}(\bx^{(k)})) - \nabla_\bx g_j(\hat{\by}^{(k)}))\|_2 \\
& \leq \|\bC\|_2 \sum_{j = 1}^M |\lambda_j^{(k)}|\|\nabla_\bx g_j(\by^{(k)}(\bx^{(k)})) - \nabla_\bx g_j(\hat{\by}^{(k)})\|_2 \hspace{-.2cm} \\
& \leq \|\bC\|_2 \sum_{j = 1}^M |\lambda_j^{(k)}| L_{g_j} \|\by^{(k)}(\bx^{(k)}) - \hat{\by}^{(k)}\|_2 \\
& \leq \|\bC\|_2 \sum_{j = 1}^M |\lambda_j^{(k)}| L_{g_j} e_y \\
& \leq \|\bC\|_2 \|\blambda^{(k)}\|_1 M_I \max_{i \ 1, \ldots, M_I} \{L_{g_j}\} e_y \, .
\end{align}
\end{subequations}
Using the definition of $M_{\lambda}$, the result follows.
\end{proof}

\begin{lemma}
\label{lem:Phimonotone}
For every $k \in \mathbb{N}$, the map $\bphi^{(k)}(\bz)$ is strongly monotone over $\cX^{(k)} \times \cD^{(k)}$ with constant $\eta_{\phi} := \min\{p, d\}$ and Lipschitz over $\cX^{(k)} \times \cD^{(k)}$ with coefficient $L_\phi$ given by:
\begin{align}
L_\phi := \sqrt{(L + p + M_g + \xi_\lambda L_g)^2 + (M_g + d)^2}
\end{align}
where $L_g := \sqrt{\sum_{m = 1}^{M_I} L_{g_m}^2}$ and $\xi_\lambda := \sup_k \max_{\blambda \in \cD^{(k)}} \|\blambda\|_2$. 
\end{lemma}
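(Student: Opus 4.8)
The plan is to decompose $\bphi^{(k)}$ as the saddle-point (KKT) operator of the \emph{unregularized} Lagrangian plus the linear perturbation coming from the regularization, i.e.\ $\bphi^{(k)}(\bz) = \bphi_0^{(k)}(\bz) + \big(p\,\bx,\; d\,\blambda\big)$ with $\bphi_0^{(k)}(\bz) := \big(\nabla_\bx \cL^{(k)}(\bz),\; -\nabla_\blambda \cL^{(k)}(\bz)\big) = \big(\nabla_\bx \cL^{(k)}(\bz),\; -\bg^{(k)}(\by^{(k)}(\bx))\big)$, and then to treat strong monotonicity and Lipschitz continuity separately.

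For strong monotonicity I would first record that $\cL^{(k)}$ is convex--concave over $\reals^n \times \cD^{(k)}$: it is affine, hence concave, in $\blambda$, and for every fixed $\blambda \in \cD^{(k)} \subseteq \reals_+^M$ it is convex in $\bx$, since $h^{(k)}$ is convex by Assumption~\ref{ass:cost} and each $g_j^{(k)}(\by^{(k)}(\cdot))$ is a convex function (Assumption~\ref{ass:nonlinearconstr}, or linear/affine for $j > M_I$) composed with the affine map $\by^{(k)}(\bx) = \bC\bx + \bD\bw^{(k)}$, so $\blambda^\sfT \bg^{(k)}(\by^{(k)}(\cdot))$ is convex because $\blambda \geq 0$. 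It is then standard that the associated saddle operator $\bphi_0^{(k)}$ is monotone; I would prove it by writing, at $\bz_1 = (\bx_1,\blambda_1)$ and $\bz_2 = (\bx_2,\blambda_2)$, the two first-order convexity inequalities in $\bx$ and the two first-order concavity inequalities in $\blambda$, adding the four of them, and observing that the Lagrangian values telescope to zero, which leaves $\big(\bphi_0^{(k)}(\bz_1) - \bphi_0^{(k)}(\bz_2)\big)^\sfT (\bz_1 - \bz_2) \geq 0$. The regularization then contributes $\big(p(\bx_1-\bx_2),\, d(\blambda_1-\blambda_2)\big)^\sfT(\bz_1-\bz_2) = p\|\bx_1-\bx_2\|_2^2 + d\|\blambda_1-\blambda_2\|_2^2 \geq \min\{p,d\}\,\|\bz_1-\bz_2\|_2^2$, giving strong monotonicity with constant $\eta_\phi = \min\{p,d\}$.

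For Lipschitz continuity, denoting by $\Delta_p$ and $\Delta_d$ the primal and dual blocks of $\bphi^{(k)}(\bz_1) - \bphi^{(k)}(\bz_2)$, I would bound each block and combine via $\|\bphi^{(k)}(\bz_1) - \bphi^{(k)}(\bz_2)\|_2 = \big(\|\Delta_p\|_2^2 + \|\Delta_d\|_2^2\big)^{1/2}$. Here $\Delta_d = -\big(\bg^{(k)}(\by^{(k)}(\bx_1)) - \bg^{(k)}(\by^{(k)}(\bx_2))\big) + d(\blambda_1-\blambda_2)$ is bounded by $M_g\|\bx_1-\bx_2\|_2 + d\|\blambda_1-\blambda_2\|_2 \leq (M_g + d)\|\bz_1-\bz_2\|_2$ via \eqref{eq:lip_g}. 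For $\Delta_p = \nabla_\bx \cL^{(k)}(\bz_1) - \nabla_\bx \cL^{(k)}(\bz_2) + p(\bx_1-\bx_2)$, I would split it into: $\nabla h^{(k)}(\bx_1) - \nabla h^{(k)}(\bx_2)$, controlled by Assumption~\ref{ass:cost} (the $L$ term); the regularization $p(\bx_1-\bx_2)$ (the $p$ term); $\big(\bJ^{(k)}(\by^{(k)}(\bx_1))\bC\big)^\sfT\blambda_1 - \big(\bJ^{(k)}(\by^{(k)}(\bx_2))\bC\big)^\sfT\blambda_1$, controlled by Lipschitz continuity of the Jacobian (Assumption~\ref{ass:nonlinearconstr}, which in Frobenius norm yields $L_g = \sqrt{\sum_{m=1}^{M_I} L_{g_m}^2}$) together with $\|\blambda_1\|_2 \leq \xi_\lambda$ (the $\xi_\lambda L_g$ term); and $\big(\bJ^{(k)}(\by^{(k)}(\bx_2))\bC\big)^\sfT(\blambda_1 - \blambda_2)$, controlled by $\|\bJ^{(k)}\|_2 \leq M_g$ (the $M_g$ term). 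Using $\|\bx_1-\bx_2\|_2, \|\blambda_1-\blambda_2\|_2 \leq \|\bz_1-\bz_2\|_2$ yields $\|\Delta_p\|_2 \leq (L + p + M_g + \xi_\lambda L_g)\|\bz_1-\bz_2\|_2$, and combining the two block bounds reproduces $L_\phi$.

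The only genuinely nontrivial step is the telescoping cancellation establishing monotonicity of $\bphi_0^{(k)}$: it hinges on two ingredients that must be invoked with care — convexity of the \emph{composed} constraint maps $g_j^{(k)}\circ\by^{(k)}$ (so that $\cL^{(k)}$ is jointly convex--concave), and nonnegativity of the dual variables, guaranteed by $\cD^{(k)} \subseteq \reals_+^M$; without the latter, the cross terms coupling the primal Jacobian term with the dual $\bg$ term cannot be signed and monotonicity fails. Everything else is routine bookkeeping of Lipschitz constants, where the main thing to be consistent about is the handling of the $\|\bC\|_2$ factors produced by the chain rule through $\by^{(k)}(\bx)$ and the choice of spectral versus Frobenius norm for $\bJ^{(k)}$.
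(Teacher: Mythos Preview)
Your proposal is correct and follows precisely the standard route the paper has in mind: the paper does not spell out a proof but simply states that the lemma ``is a slight modification of~\cite[Lemma~3.4]{Koshal11}'' and that ``the proof follows steps that are similar to~\cite{Koshal11}.'' Your decomposition $\bphi^{(k)} = \bphi_0^{(k)} + (p\bx, d\blambda)$, the monotonicity of the unregularized saddle operator via convex--concavity of $\cL^{(k)}$, and the blockwise Lipschitz bookkeeping are exactly the Koshal--Nedi\'c--Shanbhag argument, so there is no methodological difference to discuss. Your closing caveat about the $\|\bC\|_2$ factors is apt: the constants in the statement appear to silently absorb or drop some chain-rule factors (e.g.\ $L$ is the Lipschitz constant of $\nabla f^{(k)}$ rather than of $\nabla h^{(k)}$, and the $M_g$ and $\xi_\lambda L_g$ terms would naturally carry $\|\bC\|_2$ or $\|\bC\|_2^2$), but this is cosmetic and does not affect the argument.
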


The Lemma is a slight modification of~\cite[Lemma 3.4]{Koshal11}; the proof follows  steps that are similar to~\cite{Koshal11}. 

The proofs of Theorem \ref{theorem.inexact} and Theorem~\ref{cor:asym} are provided next. The proofs  follow steps that are similar to the ones outlined in~\cite{opfPursuit}; a summary of the steps as well as   modifications relative to~\cite{opfPursuit} are provided  for completeness.

\begin{proof}[Proof of Theorem \ref{theorem.inexact}]  Start from the the following equation:
\begin{align}
& \|\bz^{(k)} - \bz^{(*,k-1)} \|_2 = \|\proj_{\cX^{(k)} \times \cD^{(k)}} \{\bz^{(k-1)} - \hat{\bphi}^{(k)}(\bz^{(k-1)})\} \nonumber \\
& \hspace{5.0cm} - \bz^{(*,k-1)}\|_2 \, .\label{eq:proof_thm5_01}
\end{align}
Noticing that $\bz^{(*,k-1)}$ satisfies a fixed-point equation, leveraging the non-expansiveness property of the projection operator,   the following  inequality can be considered: 
\begin{align}
& \hspace{-.3cm} \|\bz^{(k)} - \bz^{(*,k-1)} \|_2 \leq \| \bz^{(k-1)}  - \alpha \hat{\bphi}^{(k-1)}(\bz^{(k-1)}) \nonumber \\
& \hspace{2.3cm} - \bz^{(*,k-1)} +  \alpha \bphi^{(k-1)}(\bz^{(*,k-1)})  \|_2 \, . \label{eq:proof_thm5_1}
\end{align}
Adding and subtracting $\bphi^{(k-1)}(\bz^{(k-1)})$ on the right-hand-side of~\eqref{eq:proof_thm5_1}, and using the triangle inequality, it follows that~\eqref{eq:proof_thm5_1} can be further bounded as:
\begin{align}
& \|\bz^{(k)} - \bz^{(*,k-1)} \|_2 \leq  \alpha \|\bphi^{(k-1)}(\bz^{(k-1)}) - \hat{\bphi}^{(k-1)}(\bz^{(k-1)})\|_2 \nonumber \\
& + \|\bz^{(k-1)}  - \alpha \bPhi^{(k-1)}(\bz^{(k-1)}) - \bz^{(*,k-1)} +  \alpha \bPhi^{(k)}(\bz^{(*,k-1)})  \|_2 \, . \label{eq:proof_thm5_2}
\end{align}
Following~\cite{opfPursuit}, using the results of Lemma~\ref{lem:Phimonotone}, the second term on the right-hand-side of~\eqref{eq:proof_thm5_2} can be bounded with the term $c(\alpha) \|\bz^{(k-1)} - \bz^{(*,k-1)}\|_2$; therefore,
\begin{align}
& \|\bz^{(k)} - \bz^{(*,k-1)} \|_2 \leq  \alpha \|\bphi^{(k-1)}(\bz^{(k-1)}) - \hat{\bphi}^{(k-1)}(\bz^{(k-1)})\|_2 \nonumber \\
& \hspace{1.5cm} + c(\alpha) \|\bz^{(k-1)} - \bz^{(*,k-1)}\|_2 . \label{eq:proof_thm5_3}
\end{align}
Consider now bounding $\|\bz^{(k)} - \bz^{(*,k)}\|_2$ as follows:
\begin{align}
\|\bz^{(k)} - \bz^{(*,k)}\|_2 & = \|\bz^{(k)} - \bz^{(*,k-1)} + \bz^{(*,k-1)} - \bz^{(*,k)} \|_2 \nonumber \\
& \hspace{-2.2cm} \leq \|\bz^{(*,k-1)} - \bz^{(*,k)} \|_2  + \|\bz^{(k)} - \bz^{(*,k-1)}\|_2  \label{eq:proof_thm5_4} \\
& \hspace{-2.2cm}  \leq \bar{\sigma}^{(k)}  + \alpha \|\bphi^{(k-1)}(\bz^{(k-1)}) - \hat{\bphi}^{(k-1)}(\bz^{(k-1)})\|_2  \nonumber \\
& \hspace{-2.0cm} + c(\alpha) \|\bz^{(k-1)} - \bz^{(*,k-1)}\|_2 . \label{eq:proof_thm5_5} 
\end{align}
By recursively applying~\eqref{eq:proof_thm5_5}, the result of Theorem \ref{theorem.inexact} follows.

\end{proof}

\begin{proof}[Proof of Theorem \ref{cor:asym}]  
  To show~\eqref{eqn:asym_bound_apr}, utilize the results of Lemma~\ref{lem:errorPhi} to bound $\|\bphi^{(k)}(\bz^{(k-1)}) - \hat{\bphi}^{(k)}(\bz^{(k-1)})\|_2$ and leverage the definition of $\bar{\sigma}$. Then,~\eqref{eq:proof_thm5_5} can be bounded as:
\begin{align}
& \|\bz^{(k)} - \bz^{(*,k)}\|_2 \nonumber \\
& \hspace{.2cm} \leq c(\alpha) \|\bz^{(k-1)} - \bz^{(*,k-1)}\|_2 + \bar{\sigma} + \alpha \sqrt{e_p^2 + e_d^2} \label{eq:proof_thm5_6}
\end{align}
since $c(\alpha) < 1$,~\eqref{eq:proof_thm5_6} represents a contraction. The result~\eqref{eqn:asym_bound_apr} can then be obtained  via the
geometric series sum formula. 
\end{proof}

\bibliographystyle{IEEEtran}
\bibliography{biblio.bib,biblio1.bib}

\end{document}